\definecolor{darkblue}{rgb}{0.0,0.0,0.65}
\definecolor{darkred}{rgb}{0.68,0.05,0.0}
\definecolor{darkgreen}{rgb}{0.0,0.29,0.29}
\definecolor{darkpurple}{rgb}{0.47,0.09,0.29}
\title{\bfseries Invex Programs: First Order Algorithms and Their Convergence}
\date{}
\author{%
  Adarsh Barik\\
  Department of Computer Science\\
  Purdue University\\
  West Lafayette, IN 47906\\
  \texttt{abarik@purdue.edu} \\
  \and
  Suvrit Sra\\
  Electrical Engineering \& Computer Science Department\\
  Massachusetts Institute of Technology (MIT)\\
  Cambridge, MA 02139\\
  \texttt{suvrit@mit.edu} \\
  \and
  Jean Honorio\\
  Department of Computer Science\\
  Purdue University\\
  West Lafayette, IN 47906\\
  \texttt{jhonorio@purdue.edu} \\
}
\newtheorem{theorem}{Theorem}[section]
\newtheorem{proposition}[theorem]{Proposition}
\newtheorem{lemma}[theorem]{Lemma}
\theoremstyle{definition}
\newtheorem{definition}[theorem]{Definition}
\newtheorem{assumption}[theorem]{Assumption}
\newtheorem{example}{Example}
\newcommand{\calC}{\mathcal{C}}
\newcommand{\calA}{\mathcal{A}}
\newcommand{\calO}{\mathcal{O}}
\newcommand{\real}{\mathbb{R}}
\newcommand{\T}{\intercal}
\DeclareMathOperator*{\Exp}{Exp}
\DeclareMathOperator*{\vect}{Vec}
\DeclareMathOperator*{\diag}{Diag}
\newcommand{\inner}[2]{\langle #1 \,, #2 \rangle}
\begin{document}
\maketitle

\begin{abstract}
Invex programs are a special kind of non-convex problems which attain global minima at every stationary point. While classical first-order gradient descent methods can solve them, they converge very slowly. In this paper, we propose new first-order algorithms to solve the general class of invex problems. We identify sufficient conditions for convergence of our algorithms and provide rates of convergence.  Furthermore, we go beyond unconstrained problems and provide a novel projected gradient method for constrained invex programs with convergence rate guarantees. We compare and contrast our results with existing first-order algorithms for a variety of unconstrained and constrained invex problems. To the best of our knowledge, our proposed algorithm is the first algorithm to solve constrained invex programs.
\end{abstract}
	
\section{Introduction}
\label{sec:introduction}

Many learning problems are modeled as optimization problems. With the explosion in deep learning, many of these problems are modeled as non-convex optimization problems --- either by using non-convex objective functions or by the addition of non-convex constraints. While well-studied algorithms with fast convergence guarantees are available for convex problems, such mathematical tools are more limited for non-convex problems. In fact, the general class of non-convex optimization problems is known to be NP-hard~\citep {jain2017non}. Coming up with global certificates of optimality is the major difficulty in solving non-convex problems. In this paper, we take the first steps towards solving a special class of non-convex problems, called \emph{invex problems}, which attain global minima at every stationary point~\citep{hanson1981sufficiency,ben1986invexity}. Invex problems are tractable in the sense that we can use local certificates of optimality to establish the global optimality conditions. 
	
\paragraph{Related work.}

\begin{figure}[!ht]
    \centering
    \begin{subfigure}{0.4\textwidth}
	\begin{center}
	\begin{tikzpicture}[scale=0.6]
		\draw[fill=blue!10] (3, 3.2) circle (2.7cm);
		\draw[] (0, 0) rectangle (6, 6);
		\draw[fill=white] (3, 3.2) circle (1.7cm);
		\draw[fill=white] (3, 3.2) circle (1cm);
		\node[] at (3, 1) {\small{Invex}};
		\node[] at (3, 2) {\small{G-convex}};  
		\node[] at (3, 3) {\small{Convex}};     
	\end{tikzpicture}
	\caption{\label{fig:hierarchy}Hierarchy of different function classes}
	\end{center}
    \end{subfigure}%
    \begin{subfigure}{0.3\textwidth}
        \includegraphics[scale=0.25]{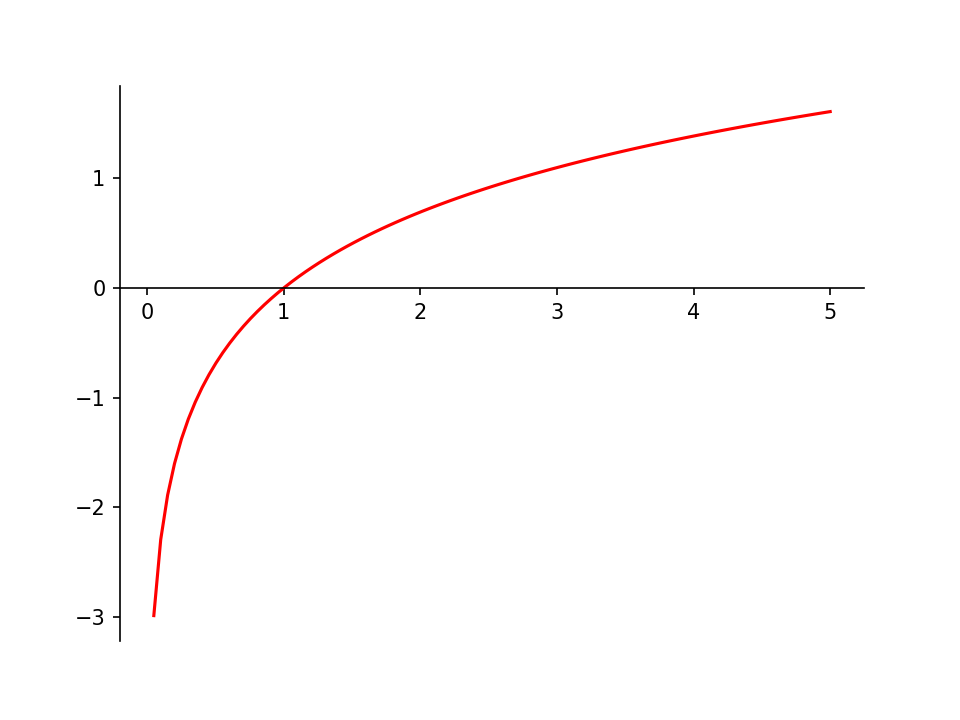}
        \caption{\label{fig:g-convex}G-convex function }
    \end{subfigure}%
    \begin{subfigure}{0.3\textwidth}
        \includegraphics[scale=0.25]{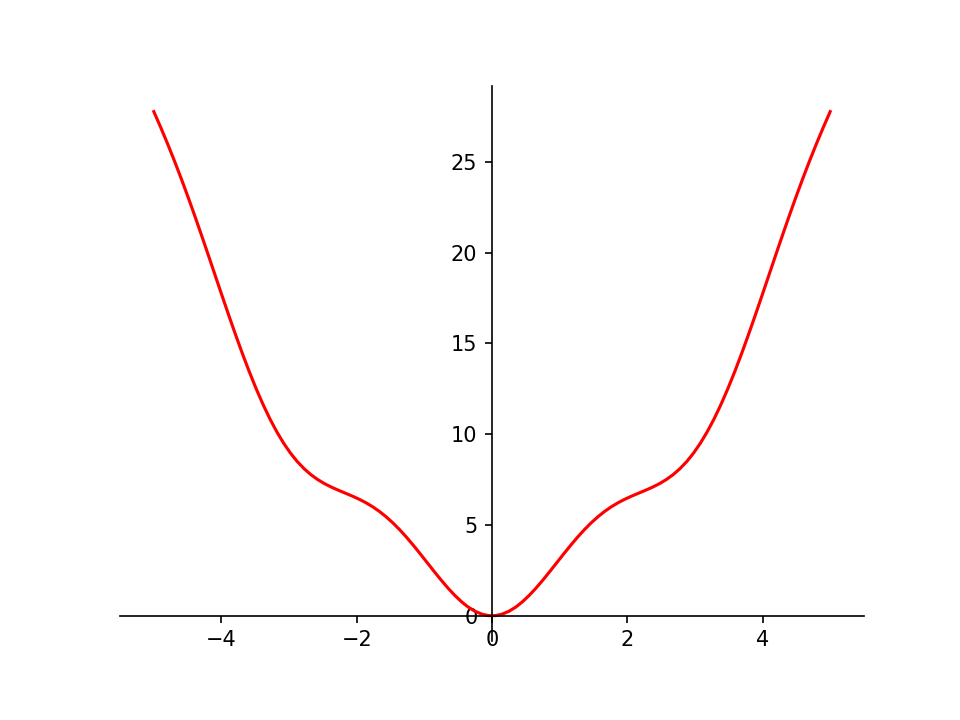}
        \caption{\label{fig:invex}Invex function }
    \end{subfigure}
    \caption{\label{fig:illustration} Figure~\ref{fig:hierarchy} shows hierarchy of different function classes, Figures~\ref{fig:g-convex} and~\ref{fig:invex} show an example of geodesically convex (G-convex) function $f(x) = \log x$ for $x > 0$ and invex function $f(x) = x^2 + 3 \sin^2 x$ respectively -- both of them are non-convex.}
\end{figure} 
First-order gradient descent methods are the most well-known algorithms to solve convex optimization problems. While they can also solve invex optimization problems under certain conditions, they can be really slow in their convergence due to their inability to use any underlying `invex' geometry. \cite{zhang2016first} have studied the minimization of a special class of unconstrained invex functions -- called geodesically convex functions. They provide convergence rate guarantees for their algorithms assuming upper bounds on the sectional curvature of the manifold. Such algorithms have also been studied by \cite{udriste2013convex} in their work on optimization methods on Riemannian manifolds, albeit with a focus on asymptotic convergence. The simplest instance of geodesically convex optimization is more commonly known as geometric programming~\citep{boyd2007tutorial,udriste2013convex}. The algorithms solving geodesically convex problems use topological properties of geodesic curves for their convergence. Often, finding the underlying geodesic curves and characterizing the manifold prove to be the bottleneck for solving such problems. These difficulties extend naturally to the general class of invex problems where topological properties are difficult to establish. In this work, while we do connect properties of invex functions with the topology of the domain, we also develop algebraic methods for implementing our proposed algorithm. Our focus in this work is to develop first-order methods with provable global convergence rates for a broader class of invex problems. Our method reduces to classical gradient descent (Riemannian gradient descent~\cite{zhang2016first,boumal2020introduction}) if the underlying function is convex (geodesically convex). Many optimization problems can be classified as invex problems by using the simple characterization by \cite{ben1986invexity}. We provide some such examples that have been studied in recent years as motivation. \cite{pini1994convexity} showed that geodesically convex functions are invex. This means that problems such as matrix Karcher mean problem~\citep{zhang2016first}, power control~\citep{boyd2007tutorial}, optimal doping profile~\citep{boyd2007tutorial} and non-convex matrix factorization~\citep{luo2022nonconvex} are invex. Any function which satisfies PL-inequality~\citep{karimi2016linear} is an invex function. This implies that averaged-out non-convex functions~\citep{wang2022provable} are also invex. Similarly, quasar-convex functions~\citep{hinder2020near} can also be shown to be invex. Recent studies have shown that many machine learning problems such as learning output kernels~\citep{dinuzzo2011learning}, multiple tasks learning~\citep{ciliberto2015convex}, minimum distance lasso~\citep{lozano2016minimum}, reinforcement learning with general utilities~\citep{zhang2020variational},  fair sparse regression~\citep{barik2021fair}, sparse mixed linear regression~\citep{barik2022sparse}, imaging with invex regularizers~\citep{pinilla2022improved} and DAG learning~\citep{bello2022dagma} are also invex. Identifying a problem to be invex is relatively a simple task, whereas coming up with an efficient algorithm to solve such a problem by leveraging the invexity is quite challenging. Furthermore, convergence rate analysis of such algorithms becomes even more tedious. To the best of our knowledge, we are not aware of any provably convergent general algorithm to solve invex problems. 
In this paper, we present first-order methods to solve invex problems with provable convergence rate guarantees under some natural technical conditions. 
		
\paragraph{Summary of contributions}
\begin{enumerate}
\setlength{\itemsep}{0pt}
    \item We present a first-order gradient descent algorithm for invex problems (Algorithm~\ref{alg:invex grad alg}). We demonstrate the feasibility of our update rule over a wide variety of examples (Section~\ref{subsec: invex gradient descent method}).
    \item As an extension, we propose a projected gradient descent algorithm for constrained invex problems (Algorithm~\ref{alg:proj invex grad alg}). We show that our algorithm works for constrained geodesically convex programs in Hadamard manifolds (Section~\ref{subsubsec: constrained geodesically convex problem}).     
    \item We provide convergence rate guarantees (Table~\ref{tab:convergenc rate table}) for our proposed algorithms (Theorem~\ref{thm:convergence of L smooth functions}, \ref{thm:convergence of invex f with triangle inequality}, \ref{thm:convergence of strongly invex functions}, \ref{thm:convergence of invex f with triangle inequality projected}, \ref{thm:convergence of strongly invex functions projected}) under varying degree of assumptions. We identify sufficient technical conditions needed for the convergence of our proposed algorithm. 
    \item Finally, we show the applicability of our algorithms on both unconstrained~\citep{bello2022dagma} and constrained~\citep{barik2021fair,barik2022sparse} machine learning problems in Section~\ref{sec:Applications}. We show that under the same initialization and hyperparameters, our algorithms outperform the standard gradient descent algorithms.  
\end{enumerate}
	
\begin{table}[!ht]
  \caption{Convergence rate guarantees of first-order methods under different settings after $k$ iterations and $\delta \in (0, 1]$. For $L$-smooth convex/invex functions convergence is studied as $f(x_k) \to f(x^*)$ while for strongly convex/invex function it is studied as $x_k \to x^*$ (or equivalent). }
  \label{tab:convergenc rate table}
  \centering
  \begin{tabular}{lcccc}
    \toprule
         & Convex & G-convex & GD on Invex & Our method on Invex  \\
    \midrule
    \textbf{Unconstrained} & & & & \\
    $L$-smooth convex/invex     & $\calO(\frac{1}{k})$ & $\calO(\frac{1}{\sqrt{k}})$ & $\times$ &  $\calO(\frac{1}{k})$   \\
    Strongly convex/invex     &  $\calO((1 - \delta)^k)$      & $\calO((1 - \delta)^k)$  & $\times$ & $\calO((1 - \delta)^k)$  \\
    \textbf{Constrained} & &  & & \\
    $L$-smooth convex/invex     & $\calO(\frac{1}{k})$  & $\times$ & $\times$ & $\calO(\frac{1}{k})$     \\
    Strongly convex/invex     & $\calO((1 - \delta)^k)$        & $\times$   & $\times$ & $\calO((1 - \delta)^k)$  \\
    \bottomrule
  \end{tabular}
\end{table}	
\section{Invexity}
\label{sec:invexity and convexity along the curve}
	
In this section, we formally define the invex function and relate it with convexity along the curve. Consider a differentiable function $\phi(x)$ defined on a Riemannian manifold $\calC$. Let $\inner{\cdot}{\cdot}_x$ be the inner product in the tangent space $T_x\calC$ of $x$ induced by the Riemannian metric. 
	
\begin{definition}[Invex functions]
\label{def:invex functions}
    Let $\phi(x)$ be a differentiable function defined on $\calC$. Let $\eta$ be a vector valued function defined in $\calC \times \calC$ such that $\inner{\eta(y, x)}{\nabla \phi(x)}_x$ is well-defined $\forall x, y \in \calC$. Then, $\phi(x)$ is an $\eta$-invex function if
    \begin{align}
	\label{eq:invexity}
	\phi(y) - \phi(x) \geq \inner{\eta(y, x)}{\nabla \phi(x)}_x, \forall x, y \in \calC \; .
    \end{align}
\end{definition}
If the manifold $\calC$ is $\real^n$, then we get the standard definition of invex functions~\citep{hanson1981sufficiency}. Convex functions are invex functions on $\real^n$ with $\eta(y, x) = y - x$. In that sense, invex functions are a generalization of convex functions. \cite{ben1986invexity} proved the sufficient and necessary condition that any stationary point of the invex function is the global minima. It follows that (at least in $\real^n$) any algorithm that converges to a stationary point, in principle, can solve unconstrained invex problems. However, convergence rate guarantees are not available for any such algorithms.  Similarly, geodesically convex functions on the Riemannian manifold $\calC$ are $\eta$-invex with $\eta(y, x) = \Exp^{-1}_x(y)$ where $\Exp^{-1}_x$ is the inverse of the exponential map $y = \Exp_x(v)$ for some $v$ in the tangent space at point $x \in \calC$. This motivates to characterize invex functions by treating them as convex functions along a curve. More formally, we provide the following proposition from \cite{pini1994convexity}.
	
\begin{proposition}[Proposition 4.1 from \cite{pini1994convexity}]
\label{prop:invexity curve}
    A differentiable real function $\phi(x)$ defined on $\calC$ is $\eta$-invex if and only if for every $x, y \in \calC$, the real function $g_{x, y}(t) = f(\gamma_{x, y}(t))$ is convex on $[0, 1]$ for some curve $\gamma_{x,y}$ such that 
    \begin{align}
    \label{eq:itegrable gamma}
    \begin{split}
	&\gamma_{x, y}(0) = x,\; \gamma_{x, y}(1) = y, \;\dot{\gamma}_{x, y}(u) (t - u) = \eta(\gamma_{x, y}(t), \gamma_{x, y}(u)),\; \forall t, u \in [0, 1] \; .
    \end{split}
    \end{align}  
\end{proposition}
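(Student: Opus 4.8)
The plan is to reduce the statement to the classical first-order characterization of convexity for a differentiable function of one real variable: a differentiable $g:[0,1]\to\real$ is convex if and only if $g(t) \geq g(u) + \dot g(u)(t-u)$ for all $t,u\in[0,1]$. The whole proposition then becomes a translation between this inequality applied to $g_{x,y}$ and the invexity inequality \eqref{eq:invexity}, mediated by the chain rule and the integrability condition \eqref{eq:itegrable gamma}. The one computation I need at the outset is the Riemannian chain rule $\dot g_{x,y}(u) = \inner{\nabla\phi(\gamma_{x,y}(u))}{\dot\gamma_{x,y}(u)}_{\gamma_{x,y}(u)}$, obtained by differentiating $g_{x,y}(t)=\phi(\gamma_{x,y}(t))$.

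For the direction ``convexity along the curve $\Rightarrow$ invexity'', I would assume that for each pair $x,y$ a curve $\gamma_{x,y}$ with the stated properties exists and $g_{x,y}$ is convex, and simply specialize the first-order convexity inequality to the endpoints $t=1,u=0$. Evaluating \eqref{eq:itegrable gamma} at $u=0,t=1$ gives $\dot\gamma_{x,y}(0) = \eta(\gamma_{x,y}(1),\gamma_{x,y}(0)) = \eta(y,x)$, so the convexity inequality $g_{x,y}(1)\geq g_{x,y}(0)+\dot g_{x,y}(0)$ reads $\phi(y)\geq\phi(x)+\inner{\eta(y,x)}{\nabla\phi(x)}_x$, which is exactly invexity. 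This direction is essentially immediate once the chain rule is in hand.

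For the converse ``invexity $\Rightarrow$ convexity along the curve'', I would apply the invexity inequality \eqref{eq:invexity} to the pair of points $(\gamma_{x,y}(t),\gamma_{x,y}(u))$ in the roles of $(y,x)$, obtaining $\phi(\gamma_{x,y}(t))-\phi(\gamma_{x,y}(u)) \geq \inner{\eta(\gamma_{x,y}(t),\gamma_{x,y}(u))}{\nabla\phi(\gamma_{x,y}(u))}_{\gamma_{x,y}(u)}$. Substituting $\eta(\gamma_{x,y}(t),\gamma_{x,y}(u)) = (t-u)\dot\gamma_{x,y}(u)$ from \eqref{eq:itegrable gamma} and pulling the scalar $(t-u)$ out of the inner product converts the right-hand side into $(t-u)\dot g_{x,y}(u)$. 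Thus $g_{x,y}(t)\geq g_{x,y}(u)+\dot g_{x,y}(u)(t-u)$ for all $t,u\in[0,1]$, which by the first-order criterion makes $g_{x,y}$ convex.

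The genuine obstacle is neither implication above but the existence of a curve $\gamma_{x,y}$ satisfying the integrability condition \eqref{eq:itegrable gamma}, which is needed even to state the right-hand side of the equivalence in the forward direction. I would construct it by reading \eqref{eq:itegrable gamma} at $u=0$ as $\eta(\gamma_{x,y}(t),x) = t\,\eta(y,x)$ and, wherever $\eta(\cdot,x)$ is locally invertible, setting $\gamma_{x,y}(t) = \eta(\cdot,x)^{-1}\big(t\,\eta(y,x)\big)$; in the Euclidean convex case $\eta(y,x)=y-x$ this recovers the segment $\gamma_{x,y}(t)=(1-t)x+ty$. The delicate part is verifying that this curve satisfies \eqref{eq:itegrable gamma} for \emph{all} $u$, not merely $u=0$, that it stays in $\calC$, and that $\gamma_{x,y}(1)=y$; this is exactly where regularity and integrability hypotheses on $\eta$ must enter, and it is the step I expect to require the most care.
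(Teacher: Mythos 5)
The paper never proves this proposition; it is imported verbatim from Pini (1994) as a citation, so there is no internal proof to compare against and your attempt must stand on its own. Your two translation arguments are correct and are exactly the intended ones: the chain rule $\dot g_{x,y}(u) = \inner{\nabla\phi(\gamma_{x,y}(u))}{\dot\gamma_{x,y}(u)}_{\gamma_{x,y}(u)}$ together with the first-order convexity criterion turns the endpoint case $u=0$, $t=1$ of \eqref{eq:itegrable gamma} into \eqref{eq:invexity}, and conversely invexity applied to the pair $\bigl(\gamma_{x,y}(t),\gamma_{x,y}(u)\bigr)$ combined with \eqref{eq:itegrable gamma} yields $g_{x,y}(t)\geq g_{x,y}(u)+\dot g_{x,y}(u)(t-u)$.

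The gap is the one you flagged, and it is worse than ``delicate'': the existence of a curve satisfying \eqref{eq:itegrable gamma} is a property of $\eta$ alone, and invexity of $\phi$ gives no leverage on it whatsoever. Concretely, a constant $\phi$ is $\eta$-invex for \emph{every} $\eta$ (both sides of \eqref{eq:invexity} vanish), yet for $\eta(y,x)\equiv c$ with $c\neq 0$ no curve can satisfy \eqref{eq:itegrable gamma}, since setting $t=u$ there forces $\eta(z,z)=0$. So the implication ``invex $\Rightarrow$ such a curve exists'' is false as literally stated, and no construction can close it; your inverse-map proposal $\gamma_{x,y}(t)=\eta(\cdot,x)^{-1}\bigl(t\,\eta(y,x)\bigr)$ in addition quietly requires $\eta(x,x)=0$ and that $D_1\eta(x,x)$ act as the identity on $\eta(y,x)$ merely to secure $\dot\gamma_{x,y}(0)=\eta(y,x)$, and validity of \eqref{eq:itegrable gamma} for all $u$ is an integrability constraint on $\eta$, not a regularity issue. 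The resolution is that Pini's actual Proposition 4.1 carries standing hypotheses on $\eta$ guaranteeing the family of curves exists (this is what the word ``integrable'' attached to \eqref{eq:itegrable gamma} is silently encoding); the restatement in this paper leaves those hypotheses implicit. Read with the existence of the curves as a hypothesis on $\eta$ rather than as a conclusion to be derived, your two implications do constitute a complete and correct proof.
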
 

Proposition~\ref{prop:invexity curve} immediately provides a setting for $\eta(y, x)$ in terms of underlying curve, i.e., $\eta(y, x) = \dot{\gamma}_{x, y}(0)$. 
For convex functions, the underlying curve $\gamma_{x, y}(t) = x + t (y - x)$. Similarly, for a geodesically convex function, the underlying curve $\gamma_{x, y}(t)$ is the geodesic curve joining $x$ and $y$. We notice, however, that finding the underlying curve for any given $\eta$-invex function may not be an easy task. We observe that proposition \ref{prop:invexity curve} allows us to connect invexity of a function to a geometric property (underlying curves) of the domain of the function.
This leads us to define invex sets as a natural extension of convex sets.
	
\begin{definition}[Invex set]
    \label{def:invex set}
    A set $\calA \subseteq \calC$ is called $\eta$-invex set if $\calA$ contains every curve $\gamma_{x, y}$  of $\calC$ as defined in proposition~\ref{prop:invexity curve} whose endpoints $x$ and $y$ are in $\calA$.
\end{definition}
	
It is also possible to characterize invex sets using $\eta(y, x)$ functions by using the relationship between $\gamma_{x, y}$ and $\eta(y, x)$ from equation~\eqref{eq:itegrable gamma}. Thus, we sometimes refer to the invex set as $\eta$-invex set with the assumption that $\eta(y, x)$ is computed using $\gamma_{x, y}$. 
We note that our definition is a slight departure from the definition of the invex set used in \cite{mohan1995invex,noor2005invex}. However, we find our definition more natural for our purpose. 
Our optimization problem involves minimizing an $\eta$-invex function over an $\eta$-invex set. In the remaining paper, we would assume $\calC$ to be an $\eta$-invex set unless stated otherwise.
	
\begin{definition}[Invex program]
    \label{def:invex program}
    Let $f: \calC \to \real$ be an $\eta_1$-invex function, and $g_i: \calC \to \real, \forall i \in \{ 1, \cdots, m \}$ be $\eta_2$-invex functions, then the optimization problem
    \begin{align}
        \label{eq:invex program}
	\begin{split}
	    \begin{matrix}
			\min_{x \in \calC} & f(x), & 
			\text{such that} & g_i(x) \leq 0 ,& \forall i \in  \{ 1, \cdots, m \}
		\end{matrix}
	\end{split}
    \end{align} 
is called an invex program. 
\end{definition} 

It is possible to include equality constraints in the program, but we opt for only inequality constraints for simplicity. 
%
Invex programs without any constraints are called unconstrained invex programs. In the next section, we propose a first-order method to solve invex programs.
	
\section{New first order algorithm}
\label{sec:new first order algorithm}
	
In this section, we develop first-order gradient descent methods for invex programs. We start with the unconstrained version of problem~\eqref{eq:invex program} and then gradually build up our method for the constrained version.

\subsection{Invex gradient descent method}
\label{subsec: invex gradient descent method}
	
The main task in our algorithm is to figure out a $y \in \calC$ for a given $x \in \calC$ and a direction $v \in T_x\calC$ such that $\eta(y, x) = v$. Such a $y$ need not be unique and we are only interested in finding one $y$ (of possibly many) that satisfies $\eta(y, x) = v$. We provide the following gradient descent algorithms to solve invex programs. 

\begin{minipage}[t]{0.45\textwidth}		
\begin{algorithm}[H]
    \caption{Invex Gradient Descent}\label{alg:invex grad alg}
    \begin{algorithmic}
        \STATE \textbf{Input: } $x_0$
        \FOR{$k\leq T$} 
        \STATE \textbf{Update: }Find an $x_{k+1}$ such that $ \eta(x_{k+1}, x_k) = -\alpha_k \nabla f(x_k)$
        \ENDFOR 
    \end{algorithmic}
\end{algorithm}
\vspace{\baselineskip}
\end{minipage}
\hfill
\begin{minipage}[t]{0.5\textwidth}
\begin{algorithm}[H]
    \caption{Projected Invex Gradient Descent}\label{alg:proj invex grad alg}
    \begin{algorithmic}
        \STATE \textbf{Input: } $x_0$
        \FOR{$k\leq T$} 
        \STATE \textbf{Update: } Find a $y_{k+1}$ such that $ \eta_1(y_{k+1}, x_k) = -\alpha_k \nabla f(x_k)$
        \STATE \textbf{Projection step: } $x_{k+1} \gets \rho_{\eta_2}(y_{k+1})$
        \ENDFOR 
    \end{algorithmic}
\end{algorithm}    
\end{minipage}

In Algorithm~\ref{alg:invex grad alg}, $T$ is the maximum number of iterations and $\alpha_k$ is the step size which depends upon the particular implementation of the algorithm. We will specify a particular choice of $\alpha_k$ in the convergence rate analysis of Algorithm~\ref{alg:invex grad alg}. Without any information on underlying curve $\gamma_{x, y}(t)$, the update step of Algorithm~\ref{alg:invex grad alg}, i.e., finding a $y \in \calC$ such that $\eta(y, x) = v$ is a problem-dependent task. Below we provide an array of examples to explain this observation.
		
\begin{example}[Convex case]
    For convex problems, $y = x + v$.
\end{example}
\begin{example}[Geodesically convex case]
    For geodesically convex problems, $y = \Exp_x(v)$ where $\Exp$ is exponential map as defined in~\cite{zhang2016first}. 
\end{example}

\begin{example}[PL inequality]
    It is known that the functions satisfying PL-inequality are invex~\citep{karimi2016linear}. However, this characterization does not readily lead to a good $\eta(y, x)$. We provide an $\eta$ function in the following lemma which can be used in the update step. 
    \begin{lemma}
        \label{lem:pl inequality}
        Let $f(x)$ be an $L$-smooth function that satisfies PL inequality for some $\mu > 0$. Then it is $\eta$-invex for $\eta(y, x) = \frac{1}{\mu}\left(\nabla f(y) + L \frac{\| y - x \|}{\| \nabla f(x) \|}   \nabla f(x) \right) $.
    \end{lemma}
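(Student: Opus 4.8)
The plan is to verify Definition~\ref{def:invex functions} directly for the proposed $\eta$ in the Euclidean setting $\calC = \real^n$. Substituting the formula and simplifying the second term via $\inner{\nabla f(x)}{\nabla f(x)} = \|\nabla f(x)\|^2$, the claim $f(y)-f(x) \geq \inner{\eta(y,x)}{\nabla f(x)}$ becomes
\begin{align*}
f(y) - f(x) \geq \frac{1}{\mu}\left( \inner{\nabla f(y)}{\nabla f(x)} + L\,\|y-x\|\,\|\nabla f(x)\| \right),
\end{align*}
equivalently
\begin{align*}
\mu\left( f(y) - f(x) \right) \geq \inner{\nabla f(y)}{\nabla f(x)} + L\,\|y-x\|\,\|\nabla f(x)\| .
\end{align*}
I would then assemble the two structural facts at my disposal. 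From $L$-smoothness I obtain the Lipschitz-gradient bound $\|\nabla f(y)-\nabla f(x)\| \leq L\|y-x\|$, hence by Cauchy--Schwarz
\begin{align*}
\inner{\nabla f(y)}{\nabla f(x)} = \|\nabla f(x)\|^2 + \inner{\nabla f(y)-\nabla f(x)}{\nabla f(x)} \leq \|\nabla f(x)\|^2 + L\,\|y-x\|\,\|\nabla f(x)\|,
\end{align*}
together with the descent-lemma consequence $f(x)-f^* \geq \tfrac{1}{2L}\|\nabla f(x)\|^2$. From the PL inequality I obtain $\tfrac{1}{2}\|\nabla f(x)\|^2 \geq \mu\,(f(x)-f^*)$, i.e.\ the upper bound $f(x)-f^* \leq \tfrac{1}{2\mu}\|\nabla f(x)\|^2$, and the qualitative fact that $\nabla f(x)=0$ forces $f(x)=f^*$.

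The core idea is to split $f(y)-f(x) = (f(y)-f^*) - (f(x)-f^*)$ and to bound the first gap below by smoothness and the second gap above by PL. This reduces the target inequality to a relation among the scalars $\|\nabla f(x)\|$, $\|\nabla f(y)\|$ and $\|y-x\|$, which I would then try to match against the two displayed gradient estimates. I would treat the degenerate point $\nabla f(x)=0$ separately, since the formula divides by $\|\nabla f(x)\|$: there PL gives $f(x)=f^*$, so $f(y)-f(x)\geq 0$ while $\inner{\eta(y,x)}{\nabla f(x)}=0$, and $\eta$ is understood in the limiting sense.

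The step I expect to be the genuine obstacle is controlling the cross term $\inner{\nabla f(y)}{\nabla f(x)}$ in the regime $f(y)\leq f(x)$, where the left-hand side $\mu(f(y)-f(x))$ is non-positive. In that regime the crude Cauchy--Schwarz upper bound is useless, and I must instead use PL together with the smoothness lower bound on $f-f^*$ to certify that the entire right-hand side is itself sufficiently negative. Making the additive term $L\|y-x\|\|\nabla f(x)\|$ interact correctly with the sign of $\inner{\nabla f(y)}{\nabla f(x)}$ — rather than simply inflating a positive quantity — is the crux, and I anticipate this is exactly where the precise PL constant $\mu$ and the smoothness estimate must be combined with care.
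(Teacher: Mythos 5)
Your reduction is faithful to Definition~\ref{def:invex functions}: for the stated $\eta$ the claim is exactly
\begin{align*}
\mu\,\bigl(f(y)-f(x)\bigr) \;\geq\; \inner{\nabla f(y)}{\nabla f(x)} + L\,\|y-x\|\,\|\nabla f(x)\| \; ,
\end{align*}
and the obstacle you flag in the regime $f(y)\leq f(x)$ is not a missing trick --- it is fatal, because this inequality is false in general. Take $f(u)=\tfrac12 u^2$ on $\real$, so $L=1$, $f^*=0$, and the PL inequality $\|\nabla f(u)\|^2\geq \mu\,(f(u)-f^*)$ holds for any $\mu\in(0,2]$. With $x=1$, $y=0$ (note $\nabla f(x)=1\neq 0$, so $\eta$ is well defined) the left-hand side equals $-\mu/2<0$, while the right-hand side equals $0 + L\cdot 1\cdot 1 = 1>0$. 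Since this example satisfies every hypothesis of the lemma, no combination of the PL bound, the descent lemma, and Cauchy--Schwarz can close your ``crux'' step; the plan cannot be completed as you set it up.

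What the paper actually proves is the reverse-signed inequality. It applies the mean value theorem to $u\mapsto \inner{\nabla f(u)}{\nabla f(x)}$ to write $\|\nabla f(x)\|^2 = \inner{\nabla f(x)}{\nabla f(y)} + \inner{\nabla f(x)}{\nabla^2 f(z)(x-y)}$ for some $z$ on the segment joining $x$ and $y$, lower-bounds $\|\nabla f(x)\|^2$ by $\mu\,(f(x)-f(y))$ using PL together with $f(y)\geq f^*$, and upper-bounds the Hessian term by $L\,\|y-x\|\,\|\nabla f(x)\|$ using $\|\nabla^2 f(z)\|_2\leq L$. The resulting chain is
\begin{align*}
\inner{\nabla f(x)}{\nabla f(y) + L\tfrac{\|y-x\|}{\|\nabla f(x)\|}\nabla f(x)} \;\geq\; \mu\,\bigl(f(x)-f(y)\bigr) \; ,
\end{align*}
which rearranges to $f(y)-f(x) \geq \inner{-\eta(y,x)}{\nabla f(x)}$: this is Definition~\ref{def:invex functions} for $-\eta$, not for the $\eta$ displayed in the lemma. (Consistently, in the counterexample above the minus-sign version does hold, with equality at $\mu=2$.) So the wall you hit is a sign slip in the statement itself: the provable claim is that $\eta(y,x) = -\frac{1}{\mu}\bigl(\nabla f(y) + L\tfrac{\|y-x\|}{\|\nabla f(x)\|}\nabla f(x)\bigr)$ certifies invexity of $L$-smooth PL functions. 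Once the sign is flipped, your outline essentially becomes the paper's argument, except that the cross term $\inner{\nabla f(y)}{\nabla f(x)}$ should not be estimated by Cauchy--Schwarz at all: it should be tied exactly to $\|\nabla f(x)\|^2$ through the mean value theorem, after which PL does the rest.
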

    The proof of Lemma~\ref{lem:pl inequality} and further discussion is deferred to Appendix~\ref{sec:functions satisfying pl inequality}. 
\end{example}

\begin{example}[Quasar Convex Functions]
    \cite{hinder2020near} showed that quasar convexity implies invexity. However, they do not provide any $\eta$ for the quasar convex functions. In the following lemma, we provide an $\eta$ for quasar convex functions.
    \begin{lemma}
        \label{lem:quasar convex functions}
        For any $\nu \geq 0$, there exists a $\beta \in [0, 1]$ such that quasar convex functions are $\eta$-invex for $\eta(y, x) = \frac{\beta}{\nu(1 - \beta)} (y - x) $. 
    \end{lemma}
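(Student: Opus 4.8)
The plan is to verify the invexity inequality $f(y) - f(x) \geq \inner{\eta(y,x)}{\nabla f(x)}$ from Definition~\ref{def:invex functions} directly, for an \emph{arbitrary} pair $(x,y) \in \calC \times \calC$, rather than only for $y = x^*$. Recall that $\nu$-quasar convexity with respect to a global minimizer $x^*$ means $f(x) - f(x^*) \leq \tfrac{1}{\nu}\inner{\nabla f(x)}{x - x^*}$ for all $x$; equivalently $\inner{\nabla f(x)}{x^* - x} \leq \nu\,(f(x^*) - f(x)) \leq 0$, since $x^*$ is a global minimizer. With the proposed $\eta(y,x) = \tfrac{\beta}{\nu(1-\beta)}(y-x)$, the quantity $\inner{\eta(y,x)}{\nabla f(x)}$ equals $c\,\inner{\nabla f(x)}{y - x}$ with $c = \tfrac{\beta}{\nu(1-\beta)}$. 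Fixing any $\nu > 0$, the map $\beta \mapsto \tfrac{\beta}{\nu(1-\beta)}$ is a bijection from $[0,1)$ onto $[0,\infty)$, so it suffices to exhibit, for each pair $(x,y)$, a nonnegative scalar $c$ with $f(y) - f(x) \geq c\,\inner{\nabla f(x)}{y-x}$ and then set $\beta = \tfrac{\nu c}{1 + \nu c} \in [0,1)$. Here $\beta$ (hence $c$) is permitted to depend on the pair, since invexity is a pointwise requirement.

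The key estimate I would extract from the hypotheses combines global optimality with quasar convexity. Since $f(y) \geq f(x^*)$ for every $y$, and quasar convexity bounds $f(x^*) - f(x)$ from below, I obtain
\begin{align}
    f(y) - f(x) \;\geq\; f(x^*) - f(x) \;\geq\; \tfrac{1}{\nu}\inner{\nabla f(x)}{x^* - x}, \notag
\end{align}
whose right-hand side is nonpositive. This single inequality already controls $f(y) - f(x)$ from below by a nonpositive term built from $\nabla f(x)$, which is exactly the shape appearing on the right of the target invexity inequality.

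It then remains to choose $c \geq 0$ by cases. If $f(y) \geq f(x)$, I take $\beta = 0$ (so $c = 0$), and the invexity inequality reads $f(y) - f(x) \geq 0$, which holds. If instead $f(y) < f(x)$ but $\inner{\nabla f(x)}{y - x} < 0$ (a descent direction), I set $c = \tfrac{\inner{\nabla f(x)}{x^* - x}}{\nu\,\inner{\nabla f(x)}{y - x}}$, which is nonnegative because numerator and denominator are both nonpositive; then $c\,\inner{\nabla f(x)}{y-x} = \tfrac{1}{\nu}\inner{\nabla f(x)}{x^* - x} \leq f(y) - f(x)$ by the displayed estimate, so the invexity inequality holds and the corresponding $\beta = \tfrac{\nu c}{1+\nu c}$ lies in $[0,1)$.

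The hard part is the remaining configuration, where $f(y) < f(x)$ yet $\inner{\nabla f(x)}{y - x} \geq 0$, so that $y-x$ is a non-descent direction at $x$ even though $y$ improves the objective. For such a pair no nonnegative $c$ can satisfy $f(y) - f(x) \geq c\,\inner{\nabla f(x)}{y-x}$, since the left side is negative and the right side is nonnegative; this is where essentially all the difficulty lies. I therefore expect the crux of the proof to be showing that quasar convexity rules out this sign pattern for the pairs under consideration --- for instance by analyzing the segment $t \mapsto x + t(y-x)$ and propagating the quasar-convexity certificate anchored at $x^*$ to force $\inner{\nabla f(x)}{y-x} < 0$ whenever $f(y) < f(x)$ --- or, absent a fully general argument, by restricting the claim to the descent directions actually generated by the update step $\eta(x_{k+1}, x_k) = -\alpha_k\nabla f(x_k)$ of Algorithm~\ref{alg:invex grad alg}, for which $\inner{\nabla f(x)}{y-x} < 0$ holds automatically.
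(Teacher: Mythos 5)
Your proposal does not prove the lemma, and you say so yourself: the configuration $f(y) < f(x)$ with $\inner{\nabla f(x)}{y-x} \geq 0$ is left open, and that configuration is where the entire content of the statement lives. Your first two cases are fine (in the second, $c = \inner{\nabla f(x)}{x^*-x}/(\nu \inner{\nabla f(x)}{y-x}) \geq 0$ and $c\inner{\nabla f(x)}{y-x} = \tfrac{1}{\nu}\inner{\nabla f(x)}{x^*-x} \leq f(x^*)-f(x) \leq f(y)-f(x)$ is correct), but the paper does not argue this way at all. Its proof is two lines: it invokes Lemma 2 of \cite{hinder2020near}, which supplies a $\beta \in [0,1]$ with $\beta\, \nabla f(x)^\T \bigl(y - \tfrac{x - \beta y}{1-\beta}\bigr) \leq \nu\,(f(y) - f(x))$ (equation~\eqref{eq:lem 2 quasar}), and then uses the algebraic identity $y - \tfrac{x-\beta y}{1-\beta} = \tfrac{y-x}{1-\beta}$ to rearrange this directly into $f(y) \geq f(x) + \tfrac{\beta}{\nu(1-\beta)} \inner{\nabla f(x)}{y-x}$. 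In other words, exactly the case you could not close is the case the citation is meant to dispatch; the quasar inequality anchored at $x^*$, the only tool you allow yourself, cannot produce a bound on $\inner{\nabla f(x)}{y-x}$ for arbitrary $y$.

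Your suspicion about the hard case is in fact well founded: quasar convexity does \emph{not} rule out the bad sign pattern, so no argument of your form (a pair-dependent nonnegative scalar $c$, equivalently $\beta \in [0,1)$) can be completed. Concretely, on $\real^2$ with $x^* = 0$, the function $f(u,v) = (u^2+v^2)\exp\bigl(8uv/(u^2+v^2)\bigr)$, extended by $f(0)=0$, is differentiable and $1$-quasar convex, since $\inner{\nabla f(x)}{x} = 2f(x) \geq f(x) \geq 0$; yet for $x = (1,0)$ and $y = (0,\tfrac{1}{2})$ one computes $\nabla f(x) = (2,8)$, so $f(y) = \tfrac{1}{4} < 1 = f(x)$ while $\inner{\nabla f(x)}{y-x} = 2 > 0$, and the invexity inequality fails for every finite $c \geq 0$. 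This means your fallback of restricting to the descent directions generated by Algorithm~\ref{alg:invex grad alg} would weaken the statement rather than prove it, and it also means the paper's invocation of \cite{hinder2020near} deserves scrutiny: their Lemma 2 asserts the existence of a point \emph{on a given segment} at which a gradient inequality holds, not the existence of a $\beta$ at a prescribed gradient point $x$ (note that in equation~\eqref{eq:lem 2 quasar} the implicit second endpoint $\tfrac{x-\beta y}{1-\beta}$ moves with $\beta$), and the degenerate value $\beta = 1$, at which $\eta$ is undefined, cannot be excluded. So the lemma must at minimum be read with $\beta$ depending on the pair $(x,y)$, and the hard case you isolated is precisely where the paper's one-line citation is carrying all of the weight.
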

    This leads to the update $ y = x + \nu \frac{1 - \beta}{\beta} v$. We provide the proof of Lemma~\ref{lem:quasar convex functions} in Appendix~\ref{sec:quasar convex functions}. 
\end{example}

\begin{example}[Connection with Bregman divergence and Mirror descent]
    Let $B_{\psi}(y, x)$ be the Bregman divergence associated with a strongly convex and differentiable function $\psi(x)$ such that $B_{\psi}(y, x) = \psi(y) - \psi(x) - \inner{\nabla \psi(x)}{y - x}$.
    Let $\eta(y, x) = \nabla B_{\psi}(y, x)  = \nabla \psi(y) - \nabla \psi(x)$, i.e., $\eta(y, x)$ is a conservative field and $B_{\psi}(y, x)$ is its potential function. Then a typical mirror descent update~\citep{duchi2010composite} can be used to compute $y$, i.e., $y = \inf_u B_{\psi}(u, x) + \alpha \inner{\nabla f(x)}{u - x}$.
\end{example}
  
\begin{example}[Recent Invex Problems]
    Some recently studied problems in invexity such as \cite{barik2021fair} and \cite{barik2022sparse} are invex for a particular form of $\eta(y, x)$. In particular, consider any point $x \in \real^n$  of the form $x = \begin{bmatrix} x_1 & x_2
        \end{bmatrix}^\T$   where $x_1 \in \real^{n_1}, x_2 \in \real^{n_2}$ such that $n_1 + n_2 = n$. Then for any two $x, y \in \real^n$, $\eta(y, x)$ takes the form $\eta(y, x) = \begin{bmatrix}
            y_1 - x_1 &
            A(y_1, x_1) (y_2 - x_2)
        \end{bmatrix}^\T$
    where $A(y_1, x_1) \in \real^{n_2 \times n_2}$ and $A(y_1, x_1) \succ \mathbf{0}, \forall y_1, x_1 \in \real^{n_1}$. For such problems, update step in Algorithm~\ref{alg:invex grad alg} becomes $y_1 = x_1 + v, y_2 = x_2 + A(y_1, x_1)^{-1} v$.
\end{example}

\begin{example}[A generic approach using function inverse]
    A generic approach to compute $y$ such that $\eta(y, x) = v$ is to treat $\eta(y, x) = g(y)$ for a fixed $x$ and then compute $y = g^{-1}(v)$. This approach works as long as we have explicit closed-form expression for $g^{-1}(v)$. For our purpose, we ignore the uniqueness of $y = g^{-1}(v)$ and allow any $y$ as long as $g(y) = v$.  
\end{example}

\subsection{Convergence of invex gradient descent method}
\label{subsec:converegence of invex gradient descent method}
	
We start the convergence analysis of Algorithm~\ref{alg:invex grad alg} with the weakest set of assumptions, and then we gradually add stronger conditions to get a better convergence rate. Before we delve into our first result of convergence, we define a notion of smoothness in the invex manifold $\calC$.
	
\begin{definition}[L-smooth function]
    A differentiable function $f:\calC \to \real$ is called $L$-smooth on an $\eta$-invex set $\calC$ if $\forall x, y \in \calC$
    \begin{align}
	\label{eq:L-smooth}
	f(y) \leq f(x) + \inner{\eta(y, x)}{\nabla f(x)}_x + \frac{L}{2} \| \eta(y, x) \|^2, 
    \end{align}
    where norm $\| \cdot \|$ is induced by the Riemannian metric at $x$.
\end{definition} 
	
Note that a function $f$ need not be an invex function to be an $L$-smooth function. Our first convergence guarantee is for $L$-smooth functions.
	
\begin{theorem}[Convergence of $L$-smooth functions]
    \label{thm:convergence of L smooth functions}
    Let $f$ be a $L$-smooth function and $f^* = \min_{x \in \calC} f(x) \geq B$ for some $B > - \infty$. If $x_k$ is computed using Algorithm~\ref{alg:invex grad alg} with $\alpha_k = \alpha \in (0, \frac{2}{L})$, then $\lim_{k \to \infty} \| \nabla f(x_k) \| = 0$.
\end{theorem}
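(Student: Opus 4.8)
The plan is to establish the standard ``sufficient decrease'' property from the $L$-smoothness inequality and then argue via a telescoping (Cauchy) sum that the gradient norms must vanish. First I would apply the definition of $L$-smoothness with the specific choice $y = x_{k+1}$ and $x = x_k$. By the update rule of Algorithm~\ref{alg:invex grad alg}, we have $\eta(x_{k+1}, x_k) = -\alpha_k \nabla f(x_k) = -\alpha \nabla f(x_k)$, so substituting into~\eqref{eq:L-smooth} gives
\begin{align}
f(x_{k+1}) \leq f(x_k) + \inner{-\alpha \nabla f(x_k)}{\nabla f(x_k)}_{x_k} + \frac{L}{2}\|\alpha \nabla f(x_k)\|^2 = f(x_k) - \alpha\left(1 - \frac{L\alpha}{2}\right)\|\nabla f(x_k)\|^2. \nonumber
\end{align}
Since $\alpha \in (0, \tfrac{2}{L})$, the constant $c := \alpha(1 - \tfrac{L\alpha}{2})$ is strictly positive, so this is a genuine descent step: $f(x_{k+1}) \leq f(x_k) - c\|\nabla f(x_k)\|^2$.

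Next I would sum this inequality over $k = 0, 1, \dots, N-1$ to telescope the objective values, yielding $c\sum_{k=0}^{N-1}\|\nabla f(x_k)\|^2 \leq f(x_0) - f(x_N)$. Here the lower bound hypothesis $f^* \geq B > -\infty$ is essential: since $f(x_N) \geq f^* \geq B$, we get $c\sum_{k=0}^{N-1}\|\nabla f(x_k)\|^2 \leq f(x_0) - B$, a bound uniform in $N$. Letting $N \to \infty$ shows the series $\sum_{k=0}^{\infty}\|\nabla f(x_k)\|^2$ converges (its partial sums are bounded and monotone), which forces the summands to tend to zero, i.e.\ $\lim_{k\to\infty}\|\nabla f(x_k)\| = 0$.

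The conceptual point worth emphasizing is that this argument uses only $L$-smoothness and the boundedness of $f$ from below, never invexity itself; this matches the remark preceding the theorem that $f$ need not be invex for the conclusion to hold. I expect the main subtlety, rather than a hard obstacle, to be the well-definedness and interpretation of the inner product and norm in the curved setting: the term $\inner{\eta(x_{k+1}, x_k)}{\nabla f(x_k)}_{x_k}$ and $\|\eta(x_{k+1},x_k)\|$ are taken in the tangent space $T_{x_k}\calC$, so the cancellation $\inner{-\alpha\nabla f(x_k)}{\nabla f(x_k)}_{x_k} = -\alpha\|\nabla f(x_k)\|^2$ must be read with all quantities measured by the Riemannian metric at $x_k$. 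As long as the update $\eta(x_{k+1},x_k) = -\alpha\nabla f(x_k)$ is realized exactly (which Algorithm~\ref{alg:invex grad alg} assumes is feasible), the algebra goes through cleanly with no curvature terms, since the smoothness inequality already packages whatever geometric distortion occurs into the single quadratic remainder.
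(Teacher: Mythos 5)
Your proof is correct and follows essentially the same route as the paper's: apply the $L$-smoothness inequality with the update $\eta(x_{k+1},x_k) = -\alpha\nabla f(x_k)$ to get the sufficient-decrease bound $\alpha(1-\tfrac{L\alpha}{2})\|\nabla f(x_k)\|^2 \le f(x_k)-f(x_{k+1})$, then telescope against the lower bound $B$ to conclude the gradient norms are square-summable and hence vanish. Your closing remarks on the role of the Riemannian metric at $x_k$ and on invexity being unnecessary are accurate but do not change the argument, which matches the paper's proof step for step.
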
 

Theorem~\ref{thm:convergence of L smooth functions} states that Algorithm~\ref{alg:invex grad alg} converges to a stationary point even if the function is not invex. Our next task is to achieve a better convergence rate by adding the assumption that $f$ is an invex function. However, to do that, we need to impose an extra condition on the choice of $\eta(\cdot, \cdot)$ which in turn imposes an extra condition on the geometry of $\calC$. To make Algorithm~\ref{alg:invex grad alg} amenable to rigorous convergence rate analysis, we impose a sufficient condition on the geometry of $\calC$ which is analogous to triangle inequality in Euclidean space.

\begin{assumption}[Triangle Inequality]
    \label{assum:triangle inequality}
    Let $x, y, z \in \calC$, then for some $b, c > 0$ 
    \begin{align}
        \label{eq:triangle inequality}
	\begin{split}
		\| \eta(y, z) \|^2 \leq & \| \eta(x, z) \|^2 + b \| \eta(y, x) \|^2 -  c \inner{ \eta(y, x) }{ \eta(z, x)}_x \; .
	\end{split}
    \end{align}
\end{assumption}
	 
The triangle inequality assumption is an assumption on the geometry of manifold $\calC$. We also note that Euclidean spaces clearly satisfy Assumption~\ref{assum:triangle inequality} by simply taking $b=1$ and $c = 2$. \cite{zhang2016first} showed that any Riemannian manifold with sectional curvature upper bounded by $\kappa \leq 0$ also satisfies Assumption~\ref{assum:triangle inequality}. Now, we are ready to state our second convergence result.
			
\begin{theorem}[Convergence of invex functions]
    \label{thm:convergence of invex f with triangle inequality}
    Let $f: \calC \to \real$ be an $L$-smooth $\eta$-invex function such that $\calC$ satisfies Assumption~\ref{assum:triangle inequality}. Furthermore, let $x^* = \arg\min_{x \in \calC} f(x)$ such that $f(x^*) > -\infty$ and $\| \eta(x_0, x^*) \| \leq M < \infty$. If $x_k$ is computed using Algorithm~\ref{alg:invex grad alg} with $\alpha_k = \alpha \in (0,  \frac{2}{L})$, then $f(x_k)$ converges to $f(x^*)$ at the rate $\calO(\frac{1}{k})$.  
\end{theorem}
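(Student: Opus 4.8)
The plan is to mirror the classical $\calO(1/k)$ analysis of gradient descent for $L$-smooth convex functions, replacing Euclidean inner products and squared distances by their $\eta$-analogues and using Assumption~\ref{assum:triangle inequality} as the surrogate for the Euclidean law of cosines. Two ingredients drive the argument: a one-step descent inequality coming from $L$-smoothness, and a one-step ``distance-to-optimum'' recursion for $\|\eta(x_k, x^*)\|^2$ built from the triangle inequality together with invexity. I would track $\|\eta(x_k, x^*)\|^2$ precisely because the hypothesis is stated as $\|\eta(x_0, x^*)\| \le M$.

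First I would establish the descent lemma. Substituting the update rule $\eta(x_{k+1}, x_k) = -\alpha \nabla f(x_k)$ into the $L$-smoothness inequality~\eqref{eq:L-smooth} with $y = x_{k+1}$, $x = x_k$ gives $f(x_{k+1}) \le f(x_k) - \alpha\left(1 - \tfrac{L\alpha}{2}\right)\|\nabla f(x_k)\|^2$. Writing $\theta := \alpha\left(1 - \tfrac{L\alpha}{2}\right)$, the restriction $\alpha \in (0, 2/L)$ forces $\theta > 0$, so $f(x_k)$ is non-increasing and, crucially, $\|\nabla f(x_k)\|^2 \le \theta^{-1}\bigl(f(x_k) - f(x_{k+1})\bigr)$.

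Second, I would apply Assumption~\ref{assum:triangle inequality} with the substitution $y = x_{k+1}$, $z = x^*$, $x = x_k$, which is the choice that makes $\|\eta(x_k, x^*)\|^2$ appear with coefficient one (and hence telescope). The update rule turns $b\|\eta(x_{k+1}, x_k)\|^2$ into $b\alpha^2\|\nabla f(x_k)\|^2$ and turns the cross term into $c\alpha\inner{\nabla f(x_k)}{\eta(x^*, x_k)}_{x_k}$; invexity~\eqref{eq:invexity} (with $y=x^*$, $x=x_k$) then bounds this inner product by $c\alpha\bigl(f(x^*) - f(x_k)\bigr)$. Altogether this yields the recursion
\[
\|\eta(x_{k+1}, x^*)\|^2 \le \|\eta(x_k, x^*)\|^2 + b\alpha^2\|\nabla f(x_k)\|^2 - c\alpha\bigl(f(x_k) - f(x^*)\bigr).
\]

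Finally, I would insert the gradient bound from the descent lemma, sum over $k = 0, \dots, K-1$, and telescope both the distance terms and the $f$-differences; dropping the non-negative $\|\eta(x_K, x^*)\|^2$, using $\|\eta(x_0, x^*)\| \le M$, and $f(x_0) - f(x_K) \le f(x_0) - f(x^*)$ leaves $c\alpha\sum_{k=0}^{K-1}\bigl(f(x_k) - f(x^*)\bigr) \le M^2 + \tfrac{b\alpha^2}{\theta}\bigl(f(x_0) - f(x^*)\bigr)$, a bound independent of $K$. Monotonicity of $f(x_k)$ then gives $K\bigl(f(x_K) - f(x^*)\bigr) \le \sum_{k=0}^{K-1}\bigl(f(x_k) - f(x^*)\bigr)$, producing the $\calO(1/K)$ rate. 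The main obstacle is the second step: one must choose exactly the right vertices in the triangle inequality so that the constant $b$ lands on the $O(\alpha^2)$ gradient term (controllable by the descent lemma) rather than on the distance term, where $b > 1$ would break telescoping, and so that the cross term carries the correct sign to combine with invexity. Getting the argument order of the non-symmetric map $\eta$ right here is the crux of the proof.
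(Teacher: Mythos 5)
Your proposal is correct and follows essentially the same route as the paper's proof: the same descent inequality from $L$-smoothness, the same application of Assumption~\ref{assum:triangle inequality} with $x = x_k$, $y = x_{k+1}$, $z = x^*$, the same use of invexity to bound the cross term, and the same telescoping plus monotonicity argument. The only cosmetic difference is that the paper additionally invokes $f(x_0) - f(x^*) \leq \frac{L}{2}\|\eta(x_0,x^*)\|^2$ (via $L$-smoothness and $\nabla f(x^*) = 0$) to express the final constant purely in terms of $M^2$, whereas you leave the $f(x_0) - f(x^*)$ term explicit; both yield the claimed $\calO(\frac{1}{k})$ rate.
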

			
We further improve convergence rate results by imposing even more conditions on function $f$. We define $\mu$-strongly $\eta$-invex functions as a natural extension to $\mu$-strongly convex functions as follows.
			
\begin{definition}[$\mu$-strongly $\eta$-invex function]
    \label{def:strongly invex functions}
    A differentiable function $f:\calC \to \real$ is called $\mu$-strongly $\eta$-invex function for some $\mu > 0$ if $f(y) \geq f(x) + \inner{\eta(y, x)}{\nabla f(x)}_x + \frac{\mu}{2} \| \eta(y, x) \|^2$, where norm $\| \cdot \|$ is induced by the Riemannian metric at $x$.
\end{definition}
We provide the following convergence results for the $\mu$-strongly $\eta$-invex functions.
			
\begin{theorem}[Convergence of strongly invex functions]
    \label{thm:convergence of strongly invex functions}
    Let $f:\calC \to \real$ be an $L$-smooth $\mu$-strongly $\eta$-invex function such that $\calC$ satisfies Assumption~\ref{assum:triangle inequality}. Furthermore, let $x^* = \arg\min_{x \in \calC} f(x)$ such that $f(x^*) > -\infty,  \| \eta(x_0, x^*) \| \leq M < \infty$ and $\| \eta(y, x) \|_x^2 \leq R \| \eta(x, y) \|_y^2, \forall x, y \in \calC$ for some $R > 0$. If $x_k$ is computed using Algorithm~\ref{alg:invex grad alg} with $\alpha_k = \alpha \in (0,  \min (\frac{2}{ R \mu c }, \frac{c}{2bL}))$, then 
    \begin{align}
	\label{eq:convergence of strongly invex}
	\| \eta(x_{k+1} , x^*) \|^2 \leq \left(1 - \frac{c \alpha R \mu}{2}\right)^{k+1} M^2 \; .
    \end{align}
\end{theorem}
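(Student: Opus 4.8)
The plan is to establish a one-step contraction of the form $\|\eta(x_{k+1}, x^*)\|^2 \leq (1 - \tfrac{c\alpha R \mu}{2}) \|\eta(x_k, x^*)\|^2$ and then iterate it $k+1$ times, starting from the hypothesis $\|\eta(x_0, x^*)\|^2 \leq M^2$, to obtain~\eqref{eq:convergence of strongly invex}. The engine of the contraction is Assumption~\ref{assum:triangle inequality}: I would apply it with $y = x_{k+1}$, $z = x^*$, $x = x_k$, and substitute the update rule $\eta(x_{k+1}, x_k) = -\alpha \nabla f(x_k)$ from Algorithm~\ref{alg:invex grad alg}, which (using $\|\eta(x_{k+1},x_k)\|^2 = \alpha^2\|\nabla f(x_k)\|^2$ and the sign flip in the cross term) yields
\[
\|\eta(x_{k+1}, x^*)\|^2 \leq \|\eta(x_k, x^*)\|^2 + b\alpha^2 \|\nabla f(x_k)\|^2 + c\alpha \inner{\nabla f(x_k)}{\eta(x^*, x_k)}_{x_k}.
\]
This isolates the two ``bad'' terms that must be dominated by the decrease furnished by strong invexity.

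Next I would control the cross term via $\mu$-strong invexity (Definition~\ref{def:strongly invex functions}) with $y = x^*$, $x = x_k$, giving $\inner{\nabla f(x_k)}{\eta(x^*, x_k)}_{x_k} \leq f(x^*) - f(x_k) - \tfrac{\mu}{2}\|\eta(x^*, x_k)\|^2$. The term $-\tfrac{\mu}{2}\|\eta(x^*,x_k)\|^2$ is precisely the contraction we are after, so the problem reduces to showing that $b\alpha^2\|\nabla f(x_k)\|^2 + c\alpha\,(f(x^*) - f(x_k)) \le 0$. For this I would invoke the standard consequence of $L$-smoothness, gradient domination $\|\nabla f(x_k)\|^2 \leq 2L\,(f(x_k) - f(x^*))$, which follows by minimizing the smoothness upper bound~\eqref{eq:L-smooth} over the search direction (the minimizer $\eta = -\tfrac1L\nabla f(x_k)$ produces a point whose value is at least $f(x^*)$). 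Substituting this bound, the two bad terms collapse to $\alpha\,(f(x^*)-f(x_k))\,(c - 2bL\alpha)$, which is nonpositive exactly because $f(x^*) \leq f(x_k)$ and the step size obeys $\alpha < \tfrac{c}{2bL}$; this is where the second half of the step-size constraint is consumed.

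At this stage I would be left with $\|\eta(x_{k+1}, x^*)\|^2 \leq \|\eta(x_k, x^*)\|^2 - \tfrac{c\alpha\mu}{2}\|\eta(x^*, x_k)\|^2$, where the two norms are based at different points ($x^*$ versus $x_k$). The final ingredient is the comparability hypothesis $\|\eta(y,x)\|_x^2 \leq R\|\eta(x,y)\|_y^2$, which I would use to compare $\|\eta(x^*, x_k)\|_{x_k}^2$ with $\|\eta(x_k, x^*)\|_{x^*}^2$, producing a contraction factor governed by $R$; the constraint $\alpha < \tfrac{2}{R\mu c}$ then guarantees this factor lies in $[0,1)$, after which a one-line induction on $k$ closes the argument. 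I expect the main obstacle to be the bookkeeping in this last step: one must track the base points carefully so that the comparability inequality is applied in the direction that yields the stated constant, and one must justify that gradient domination is legitimate in the invex/manifold setting, i.e.\ that the minimizing direction $-\tfrac1L\nabla f(x_k)$ is realizable as $\eta(y, x_k)$ for some $y \in \calC$ (which is exactly the feasibility premise underlying the update step).
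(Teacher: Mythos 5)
Your proposal follows the paper's proof essentially step for step: Assumption~\ref{assum:triangle inequality} with $(x,y,z)=(x_k,x_{k+1},x^*)$ and the update $\eta(x_{k+1},x_k)=-\alpha\nabla f(x_k)$ substituted, strong invexity at $(x_k,x^*)$ to bound the cross term, the gradient-domination bound $\|\nabla f(x_k)\|^2\le 2L\bigl(f(x_k)-f(x^*)\bigr)$ (which the paper isolates as Lemma~\ref{lem:L-smooth auxiliary}, proved by exactly your realizability argument with $\eta(y,x_k)=-\tfrac{1}{L}\nabla f(x_k)$), the same consumption of the two halves of the step-size condition, and finally the $R$-comparability plus unrolling. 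The caveat you flag at the end is genuine but applies equally to the paper: its own proof invokes ``$\|\eta(x^*,x_k)\|^2\le R\|\eta(x_k,x^*)\|^2$'' at a point where the derivation needs the reverse inequality, and the hypothesis as stated only yields the factor $\bigl(1-\tfrac{c\alpha\mu}{2R}\bigr)$ rather than the claimed $\bigl(1-\tfrac{c\alpha R\mu}{2}\bigr)$, so on this delicate step your outline is no worse off than the published argument.
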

			
We have intentionally chosen to show convergence results for a constant step size of $\alpha_t$ for simplicity. It is not difficult to get better convergence rates by carefully choosing $\alpha_t$. It is easy to verify that all our results hold for the convex case. They also extend nicely to all the results in \cite{zhang2016first} for geodesically convex case. 
	 
\subsection{Projected invex gradient descent method}
\label{subsec: projected invex gradient descent method}
			
Now that we have shown convergence results for unconstrained invex programs. We can extend these results to constrained case by providing a projected invex gradient descent method. We first discuss projection on an invex set before providing the algorithm. %
Let $\calA \subseteq \calC$ be an $\eta$-invex set. We define the projection of $x \in \calC$ on $\calA$ as a retraction.
			
\begin{definition}[Projection on invex set]
    \label{def: Projection on invex set}
    Let $\gamma_{x, y}(t)$ be the curve connecting $x \in \calC$ to $y \in \calA$ such that $\gamma_{x, y}(0) = x$ and $\gamma_{x, y}(1) = y$. Projection $\rho_{\eta}(x)$ of $x$ on $\calA$ is defined as $\rho_{\eta}(x) = \arg\min_{ y \in \calA} \| \eta(y, x) \|$.
\end{definition}
			
It is easy to see that for convex sets, projection reduces to finding $y \in \calA$ which is closest to $x$ in Euclidean distance. Also, notice that if $x \in \calA$, then $\rho_{\eta}(x) = x$. 
First, observe that in the invex program as defined in~\ref{eq:invex program} objective function $f$ is $\eta_1$-invex while the constraint set is $\eta_2$-invex. Thus, we make update in two steps. The first step works in $\eta_1$-invex set and then it is projected back to $\eta_2$-invex set. The convergence rates of the invex gradient descent algorithm can be extended to the projected invex gradient descent algorithm under the following condition (Details in Appendix~\ref{subsec:convergence of projected invex gradient method}).

\begin{assumption}[Contraction]
    \label{assum:contraction}
    Let $x, y \in \calC$ and $\rho_{\eta_2}(x), \rho_{\eta_2}(y)$ are their projection on an $\eta_2$-invex set $\calA$ respectively. Then, $\| \eta_1( \rho_{\eta_2}(y), \rho_{\eta_2}(x) ) \|_{\rho_{\eta_2}(x)} \leq  \| \eta_1( y, x ) \|_{x} $.
\end{assumption}

Assumption~\ref{alg:proj invex grad alg} clearly holds for convex objective functions on convex constraints and thus, it is a natural choice of assumption to impose on the general case of constrained invex programs. In fact, in the next subsection, we show that it also holds for geodesically convex programs.
			
\subsection{Constrained geodesically convex problem}
\label{subsubsec: constrained geodesically convex problem}

In recent literature, there has been a lot of focus on constrained geodesically convex problems (with both the objective function and constraints being geodesically convex)~\citep{weber2022riemannian,liu2020simple,zhang2022minimax}. Our projected gradient algorithm~\ref{alg:proj invex grad alg} works for constrained case of geodesically convex optimization problems with sectional curvature upper bounded by $\kappa \leq 0$. To that end, we can show that Assumption~\ref{assum:contraction} holds in this particular case.
			
\begin{theorem}[Projection contracts for geodesically convex sets with negative curvature]
    \label{thm: contraction geodesically convex}
    Let $\rho$ be the projection operator as defined in~\ref{def: Projection on invex set} on a closed geodesically convex subset $\calA$ of a simply connected Riemannian manifold $\calC$ with sectional curvature upper bounded by $\kappa \leq 0$. Then the projection satisfies Assumption~\ref{assum:contraction}.
\end{theorem}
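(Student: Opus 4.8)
The plan is to reduce Assumption~\ref{assum:contraction} in this setting to the classical fact that the nearest-point projection onto a closed convex subset of a Hadamard manifold is nonexpansive. First I would record that, for a geodesically convex problem, both the objective and the constraint set are invex with $\eta_1(y,x) = \eta_2(y,x) = \Exp_x^{-1}(y)$, the inverse exponential map. Consequently $\| \eta_1(y,x) \|_x = d(x,y)$, the Riemannian distance, and the projection of Definition~\ref{def: Projection on invex set} is exactly the metric projection $\rho(x) = \arg\min_{y \in \calA} d(x,y)$. Since $\calC$ is simply connected with sectional curvature $\kappa \le 0$, the Cartan--Hadamard theorem guarantees that $\Exp_x$ is a diffeomorphism, geodesics between points are unique, and $\calC$ is a CAT$(0)$ space; in particular $z \mapsto d(z,x)^2$ is strongly geodesically convex, so its minimizer over the closed convex set $\calA$ exists, is unique, and $\rho$ is well defined. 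Under this reduction, Assumption~\ref{assum:contraction} is precisely the inequality $d(\rho(x),\rho(y)) \le d(x,y)$ for all $x,y \in \calC$.

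The second step is to establish the variational (obtuse-angle) characterization of the projection. Fix $p = \rho(x)$ and any $q \in \calA$, and let $\gamma$ be the geodesic from $p$ to $q$, which stays in $\calA$ by convexity. The nonpositive-curvature comparison inequality gives $d(\gamma(t),x)^2 \le (1-t)\,d(p,x)^2 + t\,d(q,x)^2 - t(1-t)\,d(p,q)^2$ for $t \in [0,1]$. Since $p$ minimizes the distance to $x$ over $\calA$, the left-hand side is at least $d(p,x)^2$; rearranging and letting $t \to 0$ yields
\begin{align}
d(q,x)^2 \;\ge\; d(p,x)^2 + d(p,q)^2, \qquad \forall q \in \calA. \nonumber
\end{align}

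For the final combination step I would pass to the Berg--Nikolaev quasilinearization $\langle \overrightarrow{ab},\overrightarrow{cd}\rangle := \tfrac12\big(d(a,d)^2 + d(b,c)^2 - d(a,c)^2 - d(b,d)^2\big)$, which is algebraically bilinear and antisymmetric ($\langle \overrightarrow{ab},\overrightarrow{cd}\rangle = -\langle \overrightarrow{ba},\overrightarrow{cd}\rangle$) in any metric space, and which in a CAT$(0)$ space additionally obeys the Cauchy--Schwarz inequality $\langle \overrightarrow{ab},\overrightarrow{cd}\rangle \le d(a,b)\,d(c,d)$. The variational inequality above rewrites as $\langle \overrightarrow{px},\overrightarrow{pq}\rangle \le 0$; applying it at $p=\rho(x)$ with $q=p'$ and at $p'=\rho(y)$ with $q=p$, where $p' = \rho(y)$, gives $\langle \overrightarrow{px},\overrightarrow{pp'}\rangle \le 0$ and $\langle \overrightarrow{p'y},\overrightarrow{p'p}\rangle \le 0$. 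Expanding $\overrightarrow{xy} = \overrightarrow{xp} + \overrightarrow{pp'} + \overrightarrow{p'y}$ by additivity and using these two inequalities (together with $\overrightarrow{pp'} = -\overrightarrow{p'p}$) yields $d(p,p')^2 = \langle \overrightarrow{pp'},\overrightarrow{pp'}\rangle \le \langle \overrightarrow{xy},\overrightarrow{pp'}\rangle$, and Cauchy--Schwarz then bounds the latter by $d(x,y)\,d(p,p')$. Dividing by $d(p,p')$ (the case $p = p'$ being trivial) gives $d(p,p') \le d(x,y)$, which is Assumption~\ref{assum:contraction}.

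The step I expect to be the genuine obstacle is the Cauchy--Schwarz inequality for the quasilinearization, since this is the only place where $\kappa \le 0$ is truly used and where curvature enters quantitatively; it fails in positive curvature, so no purely metric argument can avoid a comparison theorem here. An alternative, more differential-geometric route would instead derive the angle conditions $\angle_p(x,p') \ge \pi/2$ and $\angle_{p'}(y,p) \ge \pi/2$ from the first-variation formula and then invoke the Toponogov/law-of-cosines comparison for $\kappa \le 0$; but combining the two hinge inequalities that live in the different tangent spaces $T_p\calC$ and $T_{p'}\calC$ is exactly the delicate transport issue that the quasilinearization computation sidesteps, which is why I would prefer the CAT$(0)$ formulation.
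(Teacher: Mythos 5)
Your proof is correct, and its opening move --- identifying $\|\eta(y,x)\|_x$ with the Riemannian distance $d(x,y)$ (constant-speed geodesics, $\eta(y,x)=\Exp_x^{-1}(y)$), so that Assumption~\ref{assum:contraction} becomes nonexpansiveness of the metric projection onto $\calA$ --- is exactly the reduction the paper makes. Where you diverge is in how that nonexpansiveness is established. The paper stops at the reduction and cites Lemma 11.2 of \cite{udriste2013convex} for the fact that metric projection onto a closed totally convex subset of a Hadamard manifold contracts distances. You instead prove it: first the obtuse-angle variational inequality $d(q,x)^2 \ge d(\rho(x),x)^2 + d(\rho(x),q)^2$ for all $q\in\calA$, obtained from the CAT$(0)$ comparison inequality along a geodesic in $\calA$, and then the combination of the two variational inequalities at $\rho(x)$ and $\rho(y)$ via the Berg--Nikolaev quasilinearization, its additivity/antisymmetry (which are purely algebraic identities in the distances), and its Cauchy--Schwarz inequality, which is the genuinely curvature-dependent ingredient. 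Both steps check out, including the telescoping identity behind the decomposition of $\overrightarrow{xy}$ and the final division by $d(\rho(x),\rho(y))$. What your route buys: it is self-contained; it makes explicit exactly where $\kappa\le 0$ is used (comparison inequality and Cauchy--Schwarz, both of which fail under positive curvature); it sidesteps the tangent-space-transport difficulty of a two-hinge Toponogov argument, as you note; it establishes the result in the larger generality of CAT$(0)$ metric spaces with no smoothness assumptions; and it verifies existence and uniqueness of the projection, which the paper leaves implicit. What the paper's route buys is brevity --- a two-line deferral to a standard reference. Either is acceptable; yours is the more informative argument.
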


Thus, we can use Algorithm~\ref{alg:proj invex grad alg} to solve constrained geodesically convex problems with sectional curvature upper bounded by $\kappa \leq 0$. This extends all the results from~\cite{zhang2016first} to the constrained case and provides a novel method to solve constrained geodesically convex problems.

\section{Applications}
\label{sec:Applications}
				
In this section, we provide specific examples of invex programs to validate our theory. Our task is to provide a working $\eta(y, x)$ for all the problems and explicitly construct the update step and projection step (if needed). Finally, we compare the performance of our algorithm with the gradient descent (or projected gradient descent) algorithm. The latter of which provides no convergence rate guarantees for invex problems. We chose to go with the vanilla implementation for both algorithms, i.e., without any performance enhancement tricks such as line search. This was done to ensure that the comparison remains fair as our algorithms can also be adapted to include such tricks to further boost its performance. However, that is not our focus in this work.
				
\subsection{Log-determinant acyclicity characterization for DAGs}
\label{subsec: acyclicity in dags}
				
We start with an unconstrained invex program. \cite{bello2022dagma} provided a novel characterization of the acyclicity of DAGs in their recent work. Their characterization employs a log-determinant function and is stated in Theorem 1~\citep{bello2022dagma}. Let $ \mathcal{W} = \triangleq \{ W \in \mathbb{R}^{d \times d} \mid s > r(W \circ W)  \}$. Their log-determinant acyclicity characterization of DAGs uses the function $h(W) = - \log \det (sI - W \circ W) + d \log s$
where $W \in \mathcal{W}$, $I$ is identity matrix, $r(\cdot)$ denotes the spectral radius and $A \circ B$ denotes Hadamard product between two matrices. We take $s$ to be $1$ without loss of generality and thus $h(W) = - \log \det (I - W \circ W)$. They show in Corollary 3~\citep{bello2022dagma} that $h(W)$ is an invex function. However, they do not provide any specific $\eta$ for invexity. Next, we will provide a possible $\eta$ for the problem but before that, we need to define Hadamard division of two same-sized matrices $A, B \in \real^{d \times d}$ as $(A \oslash B)_{ij} = \frac{A_{ij}}{B_{ij}}$ when $B_{ij} \ne 0$ and $0$ otherwise. Now we are ready to state the following lemma.
\begin{lemma}
    \label{lem: eta for h(W)}
    The function $h(W) = - \log \det (I - W \circ W), \forall W \in \mathcal{W}$ is $\eta$-invex for $\eta(U, W) = -\frac{1}{2} ((I - W \circ W) (\log(I - U \circ U) - \log(I - W \circ W))) \oslash W$.
\end{lemma}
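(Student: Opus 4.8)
The plan is to verify Definition~\ref{def:invex functions} directly. In fact I expect the defining inequality to hold with \emph{equality} for this particular $\eta$, since $\eta$ has evidently been reverse-engineered from the gradient, so $\eta$-invexity will follow immediately. The tangent space here is $\real^{d\times d}$ with the Frobenius metric $\inner{X}{Y} = \operatorname{tr}(X^\T Y) = \sum_{ij} X_{ij}Y_{ij}$. First I would compute the gradient of $h$. Writing $B \triangleq I - W\circ W$ and using $d\log\det B = \operatorname{tr}(B^{-1}\,dB)$ together with $d(W\circ W) = 2\,W\circ dW$, a short matrix-calculus computation gives
\[
\nabla h(W) = 2\,(I - W\circ W)^{-\T}\circ W,
\]
which is exactly the gradient reported in~\cite{bello2022dagma}.

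Next I would substitute $\eta(U,W)$ and this gradient into $\inner{\eta(U,W)}{\nabla h(W)}$ and exploit the cancellation for which $\eta$ was designed. Setting $A \triangleq I - U\circ U$ and $X \triangleq B(\log A - \log B)$, the $(i,j)$ summand of the inner product is
\[
\Big(-\tfrac12 (X\oslash W)_{ij}\Big)\big(2\,(B^{-\T})_{ij}\,W_{ij}\big) = -\,(X\oslash W)_{ij}\,W_{ij}\,(B^{-\T})_{ij}.
\]
On the support of $W$ the Hadamard division by $W$ cancels the factor $W_{ij}$ in the gradient (and $\tfrac12$ cancels $2$), leaving $-X_{ij}(B^{-\T})_{ij}$. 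Summing over all coordinates and using $\sum_{ij}X_{ij}(B^{-\T})_{ij} = \operatorname{tr}(X B^{-1})$, the cyclicity of the trace collapses $\operatorname{tr}\!\big(B(\log A - \log B)B^{-1}\big) = \operatorname{tr}(\log A - \log B)$. Finally the identity $\operatorname{tr}\log M = \log\det M$ yields
\[
\inner{\eta(U,W)}{\nabla h(W)} = -\log\det A + \log\det B = h(U) - h(W),
\]
so $h(U) - h(W) \geq \inner{\eta(U,W)}{\nabla h(W)}$ holds with equality, establishing $\eta$-invexity.

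Two points need care, and the second is the real obstacle. The minor point is justifying the matrix logarithms and the trace--log identity: since $W,U\in\mathcal{W}$ forces $r(W\circ W)<1$ and $r(U\circ U)<1$, every eigenvalue of $A$ and $B$ has positive real part, so the principal logarithm is well-defined and $\operatorname{tr}\log M = \log\det M$ applies. The genuine obstacle is the Hadamard-division convention at zero entries of $W$: there $\eta_{ij}$ is set to $0$, whereas the trace identity implicitly sums $-X_{ij}(B^{-\T})_{ij}$ over \emph{all} $(i,j)$. I would first observe that $(\nabla h(W))_{ij} = 2(B^{-\T})_{ij}W_{ij}$ also vanishes whenever $W_{ij}=0$, so those coordinates genuinely drop out of the inner product; the inner product therefore equals $-\sum_{ij:\,W_{ij}\neq 0} X_{ij}(B^{-\T})_{ij}$, which differs from $\operatorname{tr}(XB^{-1})$ by the residual $\sum_{ij:\,W_{ij}=0} X_{ij}(B^{-\T})_{ij}$. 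The crux is thus to show this residual does not spoil the identity — either by restricting attention to the generic full-support regime encountered during optimization, or by a structural argument that the omitted coordinates contribute zero (or a nonnegative amount, which still preserves the $\geq$ in Definition~\ref{def:invex functions}). This bookkeeping at the zero entries is where I expect to spend the most effort.
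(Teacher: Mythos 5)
Your proposal is essentially the paper's proof, with the details actually written out. The paper's entire argument is to state $\nabla h(W) = 2(I - W \circ W)^{-\T} \circ W$ and then assert, in a single display, that $h(U) - h(W) - \inner{\eta(U, W)}{\nabla h(W)} = 0$ for all $U, W \in \mathcal{W}$ --- i.e., precisely the Hadamard cancellation, trace cyclicity, and $\operatorname{tr} \log = \log \det$ computation you carried out, with the invexity inequality holding as an equality. The issue you single out as the real obstacle --- the convention $(A \oslash B)_{ij} = 0$ when $B_{ij} = 0$ --- is not addressed in the paper at all: its one-line verification implicitly cancels $W_{ij}$ against $1/W_{ij}$ at \emph{every} coordinate, which is valid only when $W$ has full support. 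So your attempt is not missing anything relative to the paper; rather, you have correctly identified a gap that the paper's proof silently shares. Your own observations indicate how one would close it: at coordinates with $W_{ij} = 0$ the gradient entry vanishes too, and with $A = I - U \circ U$, $B = I - W \circ W$, $X = B(\log A - \log B)$, invexity holds iff $\sum_{ij \colon W_{ij} = 0} X_{ij} (B^{-\T})_{ij} \leq 0$; for instance at $W = 0$ this sum equals $\log \det (I - U \circ U) = -h(U) \leq 0$, so the inequality survives there (with slack $h(U)$, not equality), but a general structural argument for arbitrary sparsity patterns of $W$ appears in neither your proposal nor the paper.
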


We can use our proposed $\eta$ to construct updates for Algorithm~\ref{alg:invex grad alg}. Observe that for an stepsize $\alpha$, update step in Algorithm~\ref{alg:invex grad alg} is $ \eta(W_{k+1}, W_k) = - \alpha \nabla h(W_k) $. We take $M = (I - W_{k+1} \circ W_{k+1})$ and $N = (I - W_k \circ W_k)$ for clarity. Then the update step becomes $	-\frac{1}{2} (N (\log M - \log N)) \oslash W_k = - 2 \alpha  N^{-\T} \circ W_k$ and we get $M = \exp( \log N + 4 \alpha N^{-1} ((N^{-\T} \circ W_k) \circ W_k ))$ which provides the update step. 
\begin{figure}[!ht]
	\centering
	\begin{subfigure}{.45\textwidth}
		\centering
		\includegraphics[width=\linewidth]{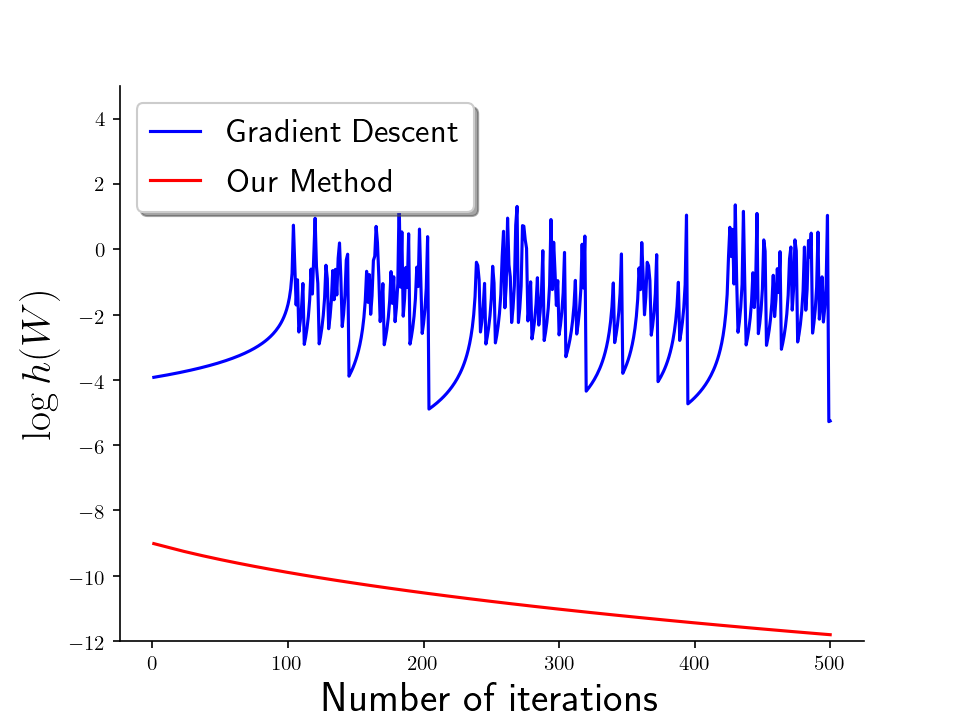}
		\caption{$n=20$}
	\end{subfigure}%
	\begin{subfigure}{.45\textwidth}
		\centering
		\includegraphics[width=\linewidth]{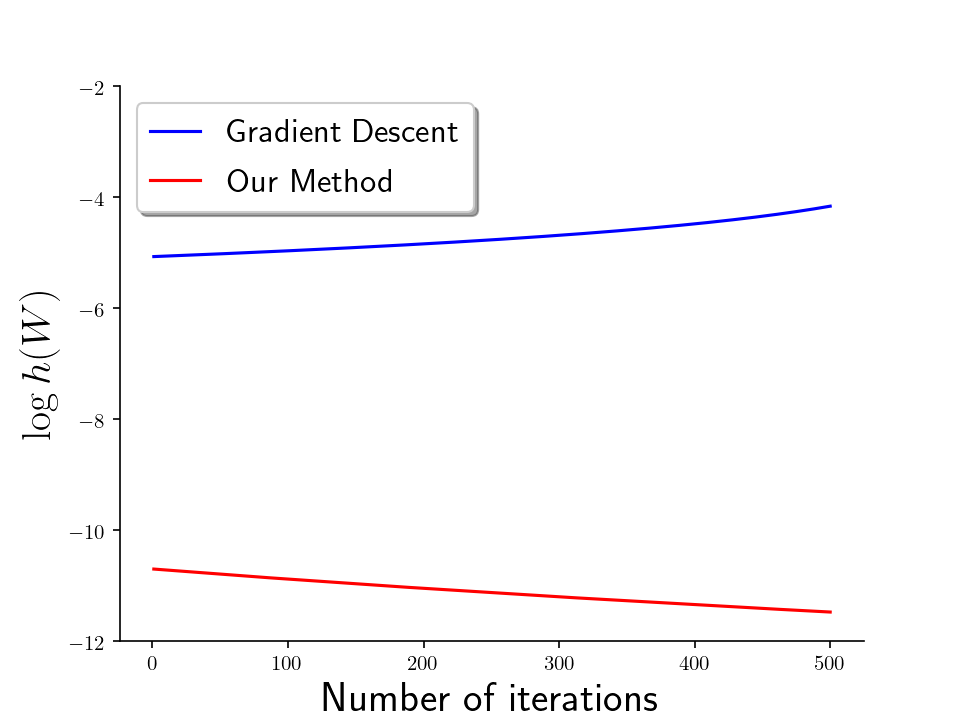}	
		\caption{$n=50$}
	\end{subfigure}%
	\caption{\label{fig:dag} $\log h(W)$ vs number of iterations - comparing Algorithm~\ref{alg:invex grad alg} with standard gradient descent algorithm for graphs on $20$ (left) and $50$ (right) nodes.}
\end{figure}  
We used this update step to implement Algorithm~\ref{alg:invex grad alg}. The performance of our algorithm was compared against the standard gradient descent algorithm. Both the algorithms were run with a random initialization of $W$ which was kept the same for both algorithms. We found that the gradient descent algorithm failed to converge in several instances but our algorithm converged towards zero objective function value as predicted by \cite{bello2022dagma} (See Figure~\ref{fig:dag}). 

\subsection{Fair sparse regression}
\label{subsec:fair sparse regression}

Our next example is a constrained invex program. \cite{barik2021fair} proposed a novel invex relaxation for fair sparse regression problem. In this problem, each data point is associated with one of the two groups, and the response variable is generated with a signed bias term based on the membership to the group. They use the following generative model: $ 	y_i = X_i^\T w^* + \gamma z_i^* + e_i, \; \forall i \in \{1, \cdots, n\}$,
where $e_i$ is a zero mean independent additive noise and $z_i^*$ is the group membership. The task is to identify regression vector $w^* \in \real^d$ along with $z_i^*$ for every data point. \cite{barik2021fair} proposed the following invex relaxation for this problem. 
\begin{align}
    \label{eq: invex fair lasso}
    \begin{matrix}
        \min_{w, Z} & \inner{M(w)}{Z} + \lambda_n \| w \|_1 & \text{such that} & \diag(Z) = 1, \; Z \succeq 0 
    \end{matrix}\; ,
\end{align}
where 
\begin{align}
    M(w) \triangleq \begin{bmatrix}
        \frac{1}{n}(Xw - y)^\T (Xw - y) + 1 & \frac{\gamma}{n}(Xw - y)^\T \\
        \frac{\gamma}{n}(Xw - y) & (\frac{\gamma^2}{n} + 1) I
    \end{bmatrix} 
\end{align}
with $X \in \real^{n \times d}$ being the data matrix and $I$ being the identity matrix of appropriate dimension. They provide an $\eta_1$ for the objective function and it is obvious that constraints are convex (we ignore the dimension of the matrices for succinct representation). Thus,
\begin{align*}
    \eta_1((w, Z), (\widetilde{w}, \widetilde{Z})) = \begin{bmatrix} w - \widetilde{w} \\ M(\widetilde{w})^{-1} M(w) (Z - \widetilde{Z}) \end{bmatrix}, \; \eta_2((w, Z), (\widetilde{w}, \widetilde{Z})) = \begin{bmatrix} w - \widetilde{w} \\  Z - \widetilde{Z} \end{bmatrix} \;.
\end{align*}

We used these $\eta$ functions to construct updates and projection for Algorithm~\ref{alg:proj invex grad alg}. Let $f(w, Z) = \inner{M(w)}{Z}$. Let $	\nabla_w f(w, Z) = \frac{\partial \inner{M(w)}{Z}}{\partial w}$ and $\nabla_Z f(w, Z) = M(w)$, then using the $\eta$ functions and step-size $\alpha$ we write the following update steps for this problem:
\begin{align}
    \label{eq: fair lasso update}
    \begin{split}
        w_{t+1} = \prod_{\lambda}(w_{t} - \alpha \nabla_w f(w_t, Z_t)), \;
        \bar{Z}_{t+1} = Z_t - \alpha M(w_{t+1})^{-1}M(w_t) \nabla_Z f(w_t, Z_t)   \; ,
    \end{split}
\end{align}  
where $\prod_{\lambda}(\cdot)$ is the projection operator which uses soft thresholding to deal with $\ell_1$-regularization. We need to project $\bar{Z}_{t+1}$ on constraints to get the final $Z_{t+1}$.
\begin{align}
    \label{eq: fair lasso update Z}
    Z_{t+1} = \begin{matrix}
        \arg\min_{Z} & \| Z - 	\bar{Z}_{t+1} \|_F^2 &
        \text{such that} & \diag(Z) = 1, \; Z \succeq 0 
    \end{matrix}
\end{align}

\begin{figure}[!ht]
	\centering
	\begin{subfigure}{.45\textwidth}
		\centering
		\includegraphics[width=\linewidth]{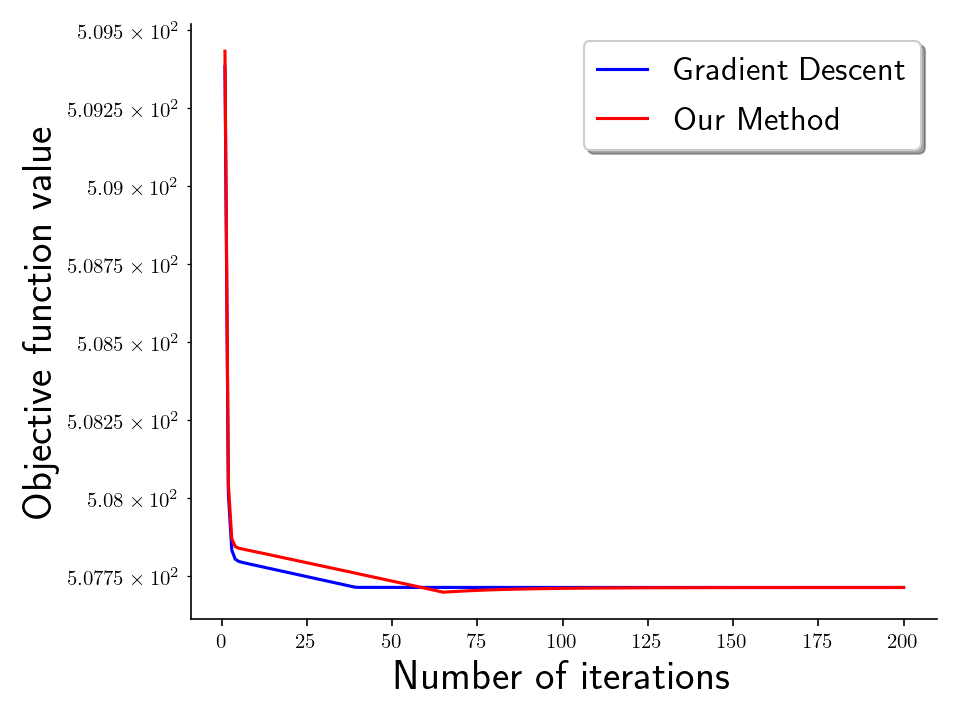}
		\caption{$n=500, d=100$}
	\end{subfigure}%
	\begin{subfigure}{.45\textwidth}
		\centering
		\includegraphics[width=\linewidth]{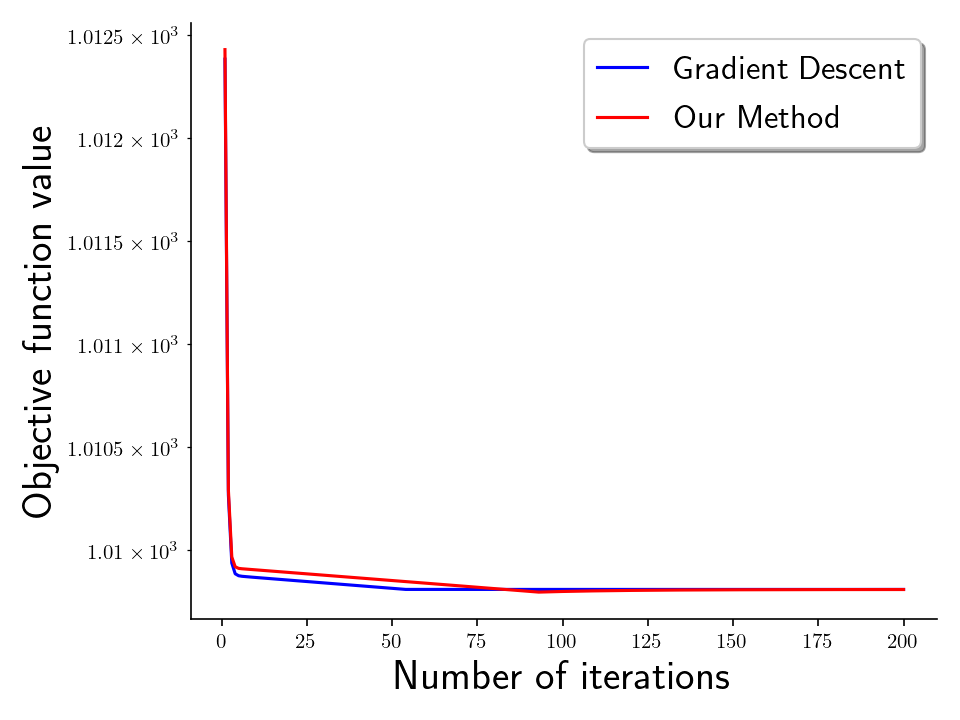}	
		\caption{$n=1000, d=200$}
	\end{subfigure}%
	\caption{\label{fig:fairlasso} Objective function value vs iterations for problem~\ref{eq: invex fair lasso} (semilog scale) - comparing Algorithm~\ref{alg:proj invex grad alg} with projected gradient descent algorithm for $n = 500, d = 100$ (left) and $ n = 1000, d = 200$ (right).}
\end{figure}  

We used update rules from equation~\eqref{eq: fair lasso update} and \eqref{eq: fair lasso update Z} to implement Algorithm~\ref{alg:proj invex grad alg}. We compared its performance against the projected gradient descent algorithm. The hyper-parameters (such as $\lambda$ and $\alpha$) and initial values of $w$ and $Z$ were kept the same across both algorithms. We report our results in Figure~\ref{fig:fairlasso}. We see that both algorithms perform in a similar manner. We expect this behavior as when $w_t$ is close to $w_{t+1}$ the update rules are the same for both the algorithms.

\subsection{Mixed linear regression}
\label{subsec:mixed linear regression}

In mixed linear regression, measurements come from one of the two different linear regression models and the task is to identify two regression vectors and the model associated with each data point. Mathematically, each data point is generated as follows: $y_i = z_i^* \inner{X_i}{\beta_1^*} + (1 - z_i^*) \inner{X_i}{\beta_2^*} + e_i, \forall i \in \{1, \cdots, n \}$ where $\beta_1^*$ and $\beta_2^*$ are $d$-dimensional vectors. \cite{barik2022sparse} proposed an invex program to solve this problem. Let $f(t, W, U) = \sum_{i=1}^n \frac{1}{2} \inner{S_i}{W + U} + \frac{1}{2} t_i \inner{S_i}{W - U}, g(t, W, U) = \| \vect(W) \|_1$ and $h(t, W, U) = \| \vect(U) \|_1$, where $	S_i = \begin{bmatrix}
		X_i \\ -y_i
\end{bmatrix} \begin{bmatrix}
		X_i^\T  -y_i
\end{bmatrix}$
and operator $\vect(.)$ vectorizes the matrix. Their invex formulation is given as:
\begin{align}
    \label{eq:invex mlr}
    \begin{matrix}
        \min_{t, W, U} & \sum_{i=1}^n f(t, W, U) + \lambda_1 g(t, W, U) + \lambda_2 h(t, W, U) \\
        \text{such that} &  W \succeq 0, \; U \succeq 0, \; W_{d+1, d+1} = 1, \; U_{d+1, d+1} = 1, \; \| t \|_{\infty} \leq 1
    \end{matrix}
\end{align}
	
The constraints of the problem are clearly convex. \cite{barik2022sparse} also provide an $\eta_1$ for the objective function, but it does not lend well to construct update rules required for Algorithm~\ref{alg:proj invex grad alg}.  We bypass this problem by showing that when $W \ne U$ then the objective function is invex for a different $\eta_1$. When $W = U$, we revert to the $\eta$ provided by \cite{barik2022sparse}. To that end, we prove the following lemma.
	
\begin{lemma}
    \label{lem:eta1 for mlr}
    Assume that $\widetilde{W} \ne \widetilde{U}$, then functions $f(t, W, U) = \sum_{i=1}^n \frac{1}{2} \inner{S_i}{W + U} + \frac{1}{2} t_i \inner{S_i}{W - U}, g(t, W, U) = \| \vect(W) \|_1$ and $h(t, W, U) = \| \vect(U) \|_1$ are $\eta_1$-invex for 
    \begin{align}
        \label{eq: eta1 for mlr}
        \eta_1((t, W, U), (\widetilde{t}, \widetilde{W}, \widetilde{U})) =  \begin{bmatrix} \tau \circ (t - \widetilde{t}) \\ W - \widetilde{W} \\ U - \widetilde{U}   \end{bmatrix} \; ,
    \end{align}
    where $\tau(W, U, \widetilde{W}, \widetilde{U}) \in \real^n$ such that $\tau_i = \frac{\inner{S_i}{W - U}}{\inner{S_i}{\widetilde{W} - \widetilde{U}}}$. 
\end{lemma}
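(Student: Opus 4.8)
The goal is to verify the defining invexity inequality~\eqref{eq:invexity} for each of $f$, $g$, and $h$ separately, using the common map $\eta_1$ from~\eqref{eq: eta1 for mlr}. Write the base point as $(\widetilde{t},\widetilde{W},\widetilde{U})$ and the comparison point as $(t,W,U)$, and abbreviate $a_i=\inner{S_i}{W-U}$ and $\widetilde{a}_i=\inner{S_i}{\widetilde{W}-\widetilde{U}}$, so that $\tau_i=a_i/\widetilde{a}_i$; the hypothesis $\widetilde{W}\neq\widetilde{U}$ is precisely what keeps these ratios well defined. The plan is to show that $f$ is in fact \emph{affine} along $\eta_1$, i.e.\ the inequality holds with equality, while $g$ and $h$ collapse to ordinary convexity of the $\ell_1$ norm.

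First I would treat $f$. Direct differentiation at the base point gives $\nabla_{t_i}f=\tfrac12\widetilde{a}_i$, $\nabla_W f=\sum_i\tfrac12(1+\widetilde{t}_i)S_i$, and $\nabla_U f=\sum_i\tfrac12(1-\widetilde{t}_i)S_i$. The decisive cancellation lives in the $t$-block of $\inner{\eta_1}{\nabla f}$: the $i$-th entry of $\tau\circ(t-\widetilde{t})$ is $\tau_i(t_i-\widetilde{t}_i)$, and multiplying by $\nabla_{t_i}f=\tfrac12\widetilde{a}_i$ produces $\tfrac12(t_i-\widetilde{t}_i)\tau_i\widetilde{a}_i=\tfrac12(t_i-\widetilde{t}_i)a_i$, so the ratio $\tau_i$ exactly reconverts the base-point quantity $\widetilde{a}_i$ into $a_i$. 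Assembling the three blocks yields
\begin{align*}
\inner{\eta_1}{\nabla f}=\sum_i\Big[\tfrac12(t_i-\widetilde{t}_i)a_i+\tfrac12(1+\widetilde{t}_i)\inner{S_i}{W-\widetilde{W}}+\tfrac12(1-\widetilde{t}_i)\inner{S_i}{U-\widetilde{U}}\Big].
\end{align*}

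Next I would expand $f(t,W,U)-f(\widetilde{t},\widetilde{W},\widetilde{U})$ directly and subtract the expression above. The terms free of $\widetilde{t}_i$ match on the nose, and the residual is $\sum_i\tfrac12\widetilde{t}_i\big[a_i-\widetilde{a}_i-\inner{S_i}{W-\widetilde{W}}+\inner{S_i}{U-\widetilde{U}}\big]$. Since $a_i-\widetilde{a}_i=\inner{S_i}{W-\widetilde{W}}-\inner{S_i}{U-\widetilde{U}}$ by bilinearity, every bracket vanishes identically, giving $f(t,W,U)-f(\widetilde{t},\widetilde{W},\widetilde{U})=\inner{\eta_1}{\nabla f}$, so~\eqref{eq:invexity} holds with equality. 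I expect this cancellation — coordinating the bilinear term $t_i\inner{S_i}{W-U}$ with the $\tau_i$ correction — to be the main obstacle, since it is exactly where the precise form of $\tau$ is forced.

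Finally, for $g$ and $h$ the argument is short. Each depends on a single block ($W$ for $g$, $U$ for $h$), so its (sub)gradient vanishes in the remaining blocks; in particular the $t$-block of $\eta_1$, where the nonstandard factor $\tau$ resides, is annihilated. The surviving inner product is $\inner{W-\widetilde{W}}{\partial_W\|\vect(\widetilde{W})\|_1}$ for $g$ (respectively $\inner{U-\widetilde{U}}{\partial_U\|\vect(\widetilde{U})\|_1}$ for $h$), understood with a subgradient of the nonsmooth $\ell_1$ term. Because the $W$- and $U$-blocks of $\eta_1$ are simply $W-\widetilde{W}$ and $U-\widetilde{U}$, the invexity inequality is literally the subgradient inequality for the convex map $\|\vect(\cdot)\|_1$, which holds. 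This settles all three cases and hence the lemma.
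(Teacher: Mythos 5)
Your proof is correct and follows essentially the same route as the paper's: direct verification that $f$ satisfies the invexity inequality with equality (using the key identity $\tau_i\inner{S_i}{\widetilde{W}-\widetilde{U}}=\inner{S_i}{W-U}$), and reduction of the cases $g$ and $h$ to the convexity of the $\ell_1$ norm since their gradients kill the $t$-block of $\eta_1$. If anything, your write-up is more careful than the paper's, whose displayed computation omits the factor $(t_i-\widetilde{t}_i)$ in the $t$-block term (evidently a typo), whereas you track it explicitly and isolate the exact cancellation that forces the form of $\tau$.
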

Now we are ready to construct update and projection rules. 
Let $\nabla_t f(t, W, U)_i = \frac{1}{2} \inner{S_i}{W - U}, \forall i=\{1, \cdots, n\}$, $\nabla_W f(t, W, U) = \sum_{i=1}^n \frac{t_i + 1}{2} S_i$ and $\nabla_U f(t, W, U) = \sum_{i=1}^n \frac{1 - t_i}{2} S_i$, then we propose the following update steps for step-size $\alpha$:
\begin{align}
    \label{eq: mlr update}
    \begin{split}
        &\bar{W}_{k+1} = \prod_{\lambda_1}(W_{k} - \alpha \nabla_W f(t_k, W_k, U_k)), \;
        \bar{U}_{k+1} = \prod_{\lambda_2}(U_{k} - \alpha \nabla_W f(t_k, W_k, U_k))  \\
        &\bar{t}_{k+1} = t_k - \alpha \nabla_Z f(t_k, W_k, U_k) \oslash \tau(W_{k+1}, U_{k+1}, W_k, U_k)  \; ,
    \end{split}
\end{align}  
where $\prod_{\lambda}(\cdot)$ is the projection operator which uses soft thresholding to deal with $\ell_1$-regularization. We use the following projection steps to get $W_{k+1}, U_{k+1}$ and $t_{k+1}$.
\begin{align}
    \label{eq: mlr projection}
    \begin{split}
        &W_{k+1} = \begin{matrix}
            \arg\min_{W} & \| W - 	\bar{W}_{k+1} \|_F^2 \\
            \text{such that} & W_{d+1, d+1} = 1,\; W \succeq 0 
        \end{matrix}, \;     U_{k+1} = \begin{matrix}
            \arg\min_{U} & \| U - 	\bar{U}_{k+1} \|_F^2 \\
            \text{such that} & U_{d+1, d+1} = 1, \; U \succeq 0 
        \end{matrix} \\
        &t_{k+1} = \begin{matrix}
            \arg\min_{t} & \| t - 	\bar{t}_{k+1} \|_2^2 \\
            \text{such that} & \| t \|_{\infty} \leq 1
        \end{matrix} 
    \end{split}
\end{align}

\begin{figure}[!ht]
	\centering
	\begin{subfigure}{.45\textwidth}
		\centering
		\includegraphics[width=\linewidth]{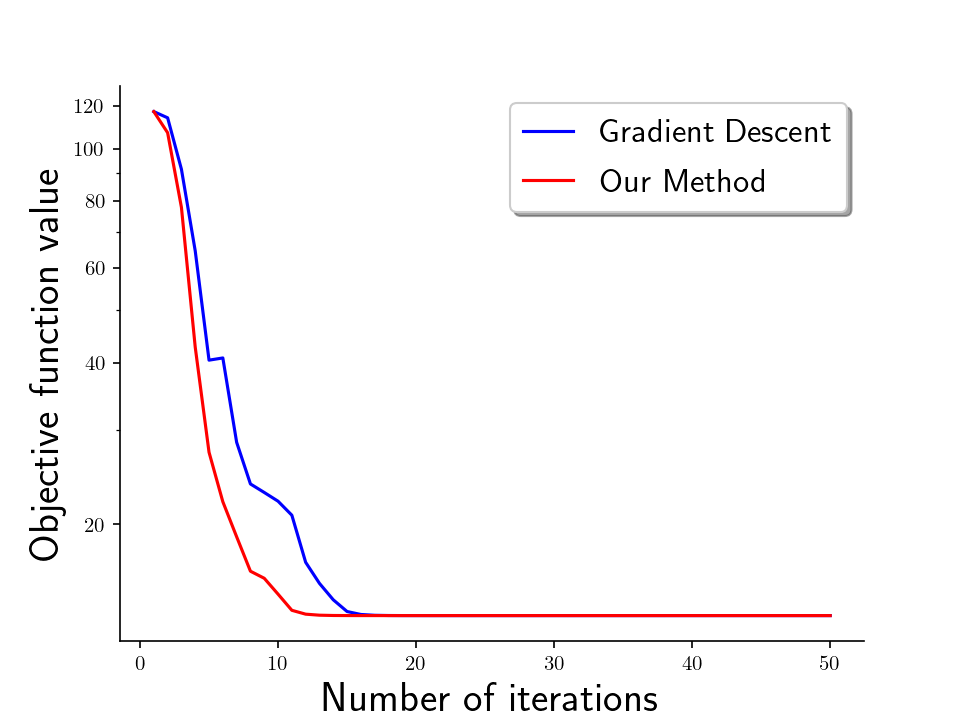}	
		\caption{$n=50, d=10$}
	\end{subfigure}%
 	\begin{subfigure}{.45\textwidth}
		\centering
		\includegraphics[width=\linewidth]{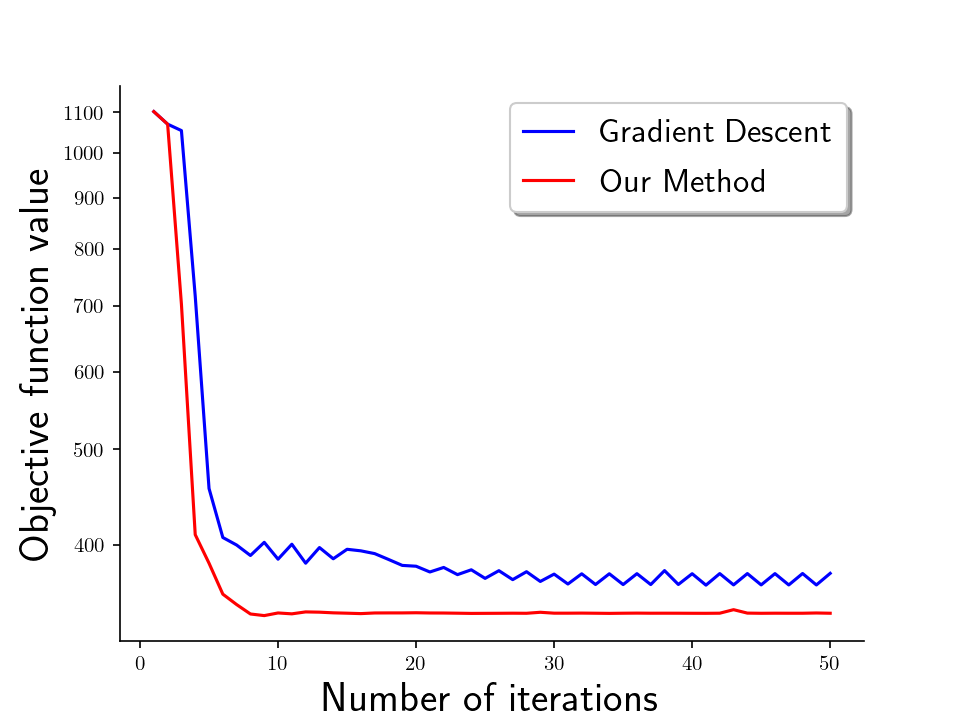}
		\caption{$n=500, d=100$}
	\end{subfigure}%
	\caption{\label{fig:mlr} Objective function value vs iterations for problem~\ref{eq:invex mlr} (semilog scale)- comparing Algorithm~\ref{alg:proj invex grad alg} with projected gradient descent algorithm for $n = 50, d = 10$ (left) and $ n = 500, d = 100$ (right).}
\end{figure}  

We implemented Algorithm~\ref{alg:proj invex grad alg} using update and projection rules from equation~\eqref{eq: mlr update} and equation~\eqref{eq: mlr projection}. Like before, we compared the performance of our algorithm with the projected gradient descent method with the same set of hyperparameters and initialization. We report our results in Figure~\ref{fig:mlr}. We see that our algorithm converges faster than the projected gradient descent algorithm.

\section{Conclusion and future work}
In this work, we have taken the first steps towards providing algorithms to solve constrained and unconstrained invex programs within certain technical conditions. We show that our algorithm can be used to solve constrained geodesically convex optimization problems with provable convergence rate guarantees. We also show the applicability of our proposed algorithm in a variety of machine-learning applications. Our work employs some natural assumptions for mathematical analysis. But these are only sufficient conditions for convergence. As the future direction, it would be interesting to see if these assumptions can be relaxed without losing on the convergence rate guarantees. From an application point of view, it would also be interesting to come up with an explicit form of update rules and the projection operator from a given $\eta$ for a large class of invex problems. Another direction of research could be to study the accelerated version of our algorithms. While already for the subclass of invex problems, namely, geodesically convex ones, it is known that without further assumptions/restrictions, global acceleration similar to the Euclidean Nesterov acceleration does not hold. However, it is a valuable question to explore conditions under which such an acceleration holds for our setting.

\bibliographystyle{apalike}
\setlength{\bibsep}{2pt}
\bibliography{invex_algo}

\newpage

\appendix

\section{Functions satisfying PL inequality}
\label{sec:functions satisfying pl inequality}

PL functions are a special class of (possibly nonconvex) functions that satisfy the following property:
 \begin{align*}
        \| \nabla f(x) \|^2 \geq \mu (f(x) - f(x^*)) \; ,
\end{align*}
\cite{karimi2016linear} showed that if an $L$-smooth function satisfies PL inequality, then it can be shown that it achieves an exponential convergence rate. These functions are known to be invex. Here we provide a characterization of their invexity by providing an $\eta$ which can be used to construct updates in Algorithm~\ref{alg:invex grad alg}. To that end, we show the validity of Lemma~\ref{lem:pl inequality}.
\paragraph{Lemma~\ref{lem:pl inequality}}
    \emph{Let $f(x)$ be an $L$-smooth function which satisfies PL inequality for some $\mu > 0$. Then it is $\eta$-invex for the following $\eta$:
    \begin{align}
        \eta(y, x) = \frac{1}{\mu}\left(\nabla f(y) + L \frac{\| y - x \|}{\| \nabla f(x) \|}   \nabla f(x) \right) 
    \end{align}
}
\begin{proof}
    \label{proof:lem pl inequality}
    Since $f(x)$ follows PL inequality for some $\mu > 0$, the following inequality holds~\citep{karimi2016linear} for all $x$ in the domain $D$ of $f(x)$:
    \begin{align}
        \label{eq:pl inequality}
        \| \nabla f(x) \|^2 \geq \mu (f(x) - f(x^*)) \; ,
    \end{align}
    where $x^* = \arg\min_{x \in D} f(x)$. Using Taylor series expansion of $g(y) = \nabla f(y)^\T v$ around $x$ and then substituting for $v = \nabla f(x)$, we can write
    \begin{align}
        \| \nabla f(x) \|^2  = \nabla f(x)^\T \nabla f(y) + \nabla f(x)^\T \nabla^2 f(z) (x - y)  
    \end{align}
    for some $z= y + t (x - y), t \in [0, 1]$. It follows that 
    \begin{align}
        \inner{\nabla f(x)}{   \nabla f(y) +  \nabla^2 f(z) (x - y) } \geq \mu (f(x) - f(x^*)) \geq \mu (f(x) - f(y)) 
    \end{align}
    Given that $f(x)$ is $L$-smooth and 
    \begin{align}
        \max_{\| M \|_2 \leq L} u^\T M v = L \| u \| \| v \| \; ,
    \end{align}
    we can write
    \begin{align}
        \inner{\nabla f(x)}{   \nabla f(y) +  L \frac{\| y - x \|}{\| \nabla f(x) \|} \nabla f(x) } \geq \mu (f(x) - f(x^*)) \geq \mu (f(x) - f(y)) 
    \end{align}
    This completes our proof.
\end{proof}

\section{Quasar convex functions}
\label{sec:quasar convex functions}

Quasar convex functions~\citep{hinder2020near} are another interesting class of possibly nonconvex functions which achieve global minima at all their stationary points. Thus, they fall under the class of invex functions. Here, we propose an $\eta$ for the class of quasar convex functions which can be used to construct updates in Algorithm~\ref{alg:invex grad alg}. Below, we prove Lemma~\ref{lem:quasar convex functions}.
\paragraph{Lemma~\ref{lem:quasar convex functions}}
    \emph{For any $\nu \geq 0$, there exists a $\beta \in [0, 1]$ such that quasar convex functions are $\eta$-invex for $\eta(y, x) = \frac{\beta}{\nu(1 - \beta)} (y - x) $. 
}
\begin{proof}
    \label{proof:lem quasar convex functions}
    First, we use the result from Lemma 2 of \cite{hinder2020near} to show that for any $\nu \geq 0$, there exists a $\beta \in [0, 1]$, such that 
    \begin{align}
        \label{eq:lem 2 quasar}
        \beta \nabla f(x)^\T (y - \frac{x - \beta y}{1 - \beta}) \leq \nu (f(y) - f(x)) 
    \end{align}
    We can simplify equation~\eqref{eq:lem 2 quasar} to write
    \begin{align}
        f(y) \geq f(x) + \nabla f(x)^\T \frac{\beta}{\nu (1 - \beta)} (y - x)
    \end{align}
    This completes our proof.
\end{proof}
We note that $\beta$ can be computed efficiently using a binary-search algorithm(refer to Algorithm 2 of \cite{hinder2020near}).

\section{Proof of Theorems and Lemmas}
\label{sec:proof of theorems and lemmas}

Before we begin to solve the optimization problem~\eqref{eq:invex program}, we will prove some technical results to understand the problem in a better way. First, we will show that the constraint set is indeed an $\eta_2$-invex set. We will do it in two parts.
	
\begin{lemma}
    \label{lem:invex set from a function}
    Let $\phi:\calC \to \real$ be an $\eta$-invex function, then $\phi(x) \leq c$ is an $\eta$-invex set for any $c \in \real$.
\end{lemma}

\begin{proof}
 Let $\gamma_{x, y}$ be the underlying curve connecting $x, y \in \calC$ corresponding to $\eta(y, x)$ satisfying equation~\eqref{eq:itegrable gamma}. 
 
 Using definition~\ref{def:invex set}, we can redefine invex functions on an invex set $\calA \subseteq \calC$ as following:

\begin{definition}[Invex functions on invex set] 
    \label{def:invex functions on invex set}
    Let $\calA \subseteq \calC$ be an invex set. A real-valued differentiable function $\phi:\calA \to \real$ is called invex if 
    \begin{align}
        \label{eq:invex functions on invex set}
	\phi(\gamma_{x, y}(t)) \leq (1 - t) \phi(x) + t \phi(y), \; \forall x, y \in \calA, \forall t \in [0, 1] 
    \end{align}
\end{definition}
	
Definitions \ref{def:invex functions} and \ref{def:invex functions on invex set} are connected with each other through the relationship between $\gamma_{x, y}$ and $\eta(y, x)$ in equation~\eqref{eq:itegrable gamma}. 

 Let $\calA = \{ x \in \calC | \phi(x) \leq c \}$. We take $x, y \in \calA$. We will then need to show that $\gamma_{x, y}(t) \in \calA, \forall t \in [0, 1]$. Using definition                        
\ref{def:invex functions on invex set}
\begin{align*}
	\phi(\gamma_{x,y}(t)) &\leq (1 - t) \phi(x) + t \phi(y), \; \forall x, y \in \calA, \forall t \in [0, 1] \\
	&\leq (1 - t) c + t  c = c
\end{align*} 
It follows that $\gamma_{x, y}(t) \in \calA, \forall t \in [0, 1]$.
\end{proof}

Next, we use Lemma~\ref{lem:invex set from a function} to show that the constraint set is an $\eta$-invex set.
	
\begin{lemma}
    \label{lem:intersection of invex sets}
    Let $g_i:\calC \to \real, \forall i \in \{ 1, \cdots, m \}$ be $\eta$-invex functions, then the set $\calA = \cap_{i=1}^m \calA_i$ is $\eta$-invex where $\calA_i = \{ x \in \calC | g_i(x) \leq 0 \}$. 
\end{lemma}
\begin{proof}
	Let $x, y \in \calA$, then by definition $x, y \in \calA_i, \forall i \in \{ 1, \cdots, m \}$. We know from Lemma~\ref{lem:invex set from a function}, that $\calA_i$'s are $\eta$-invex set. Let $\gamma_{x, y}$ be the underlying curve connecting $x, y$. Then, it follows that $\gamma_{x, y}(t) \in \calA_i, \forall i \in \{1, \cdots, m\}, \forall t \in [0, 1]$. Thus, $\gamma_{x, y}(t) \in \calA, \forall t \in [0, 1]$. 
\end{proof}

\paragraph{Theorem~\ref{thm:convergence of L smooth functions}}(Convergence of $L$-smooth functions.)
\emph{	Let $f$ be a $L$-smooth function and $f^* = \min_{x \in \calC} f(x) \geq B$ for some $B > - \infty$. If $x_k$ is computed using Algorithm~\ref{alg:invex grad alg} with $\alpha_k = \alpha \in (0, \frac{2}{L})$, then 
	\begin{align}
		\lim_{k \to \infty} \| \nabla f(x_k) \| = 0 \; ,
	\end{align} 
}
\begin{proof}
	\label{proof:convergence of L smooth functions}
	Since $f$ is an $L$-smooth function. We have
	\begin{align}
		\label{eq:f is l smooth}
		\begin{split}
			f(x_{k+1}) \leq &f(x_k) + \inner{\eta(x_{k+1}, x_k)}{\nabla f(x_k)}_{x_k} + \frac{L}{2} \| \eta(x_{k+1}, x_k) \|^2
		\end{split}
	\end{align}
	Using Algorithm~\ref{alg:invex grad alg}, we have $\eta(x_{k+1}, x_k) = - \alpha_k \nabla f(x_k)$. Thus,
	\begin{align}
		\label{eq:f is l smooth 1}
		\begin{split}
			f(x_{k+1}) \leq &f(x_k) - \alpha \inner{ \nabla f(x_k) }{\nabla f(x_k)}_{x_k} + \frac{L \alpha^2}{2} \| \nabla f(x_k) \|^2
		\end{split}
	\end{align}
	Since $\alpha \in (0, \frac{2}{L})$, it follows that
	\begin{align}
		\label{eq:decreasing function}
		\alpha (1 - \frac{L\alpha}{2}) \| \nabla f(x_k) \|^2 \leq f(x_k) - f(x_{k+1})
	\end{align}
	After telescoping sum and simplification, we get
	\begin{align}
		\label{eq:f is l smooth 2}
		\sum_{k=0}^{\infty} \| \nabla f(x_k) \|^2 \leq \frac{f(x_0) - B}{\alpha (1 - \frac{L \alpha}{2})}
	\end{align}
	Since right-hand side of equation~\eqref{eq:f is l smooth 2} is finite, it follows that $\lim_{k \to \infty} \| \nabla f(x_k) \| = 0$.
\end{proof} 

\paragraph{Theorem \ref{thm:convergence of invex f with triangle inequality}}(Convergence of invex functions.)
	\emph{Let $f: \calC \to \real$ be an $L$-smooth $\eta$-invex function such that $\calC$ satisfies Assumption~\ref{assum:triangle inequality}. Furthermore, let $x^* = \arg\min_{x \in \calC} f(x)$ such that $f(x^*) > -\infty$ and $\| \eta(x_0, x^*) \| \leq M < \infty$. If $x_k$ is computed using Algorithm~\ref{alg:invex grad alg} with $\alpha_k = \alpha \in (0,  \frac{2}{L})$, then $f(x_k)$ converges to $f(x^*)$ at the rate $\calO(\frac{1}{k})$.  
}
\begin{proof}
	\label{proof:convergence of invex f with triangle inequality}
	We apply Equation~\eqref{eq:triangle inequality} by taking $x = x_k, y = x_{k+1}$ and $z = x^*$.
	\begin{align}
		\label{eq:triangle inequality 1}
		\begin{split}
			\| \eta(x_{k+1}, x^*) \|^2 \leq & \| \eta(x_k, x^*) \|^2 + b \| \eta(x_{k+1}, x_k) \|^2 -  c \inner{ \eta(x_{k+1}, x_k) }{ \eta(x^*, x_k)}_{x_k} 
		\end{split}
	\end{align}
	From Algorithm~\ref{alg:invex grad alg}, $\eta(x_{k+1}, x_k) = -\alpha \nabla f(x_k)$.
	\begin{align}
		\label{eq:triangle inequality 2}
		\begin{split}
			\| \eta(x_{k+1}, x^*) \|^2 \leq & \| \eta(x_k, x^*) \|^2 + b \alpha^2 \|  \nabla f(x_k) \|^2 +  c \alpha \inner{ \nabla f(x_k) }{ \eta(x^*, x_k)}_{x_k} 
		\end{split}
	\end{align}
	Note that since $f$ is $\eta$-invex, we have $f(x^*) \geq f(x_k) + \inner{\eta(x^*, x_k)}{\nabla f(x_k)}$. Thus,
	\begin{align}
		\label{eq:triangle inequality 3}
		\begin{split}
			&\| \eta(x_{k+1}, x^*) \|^2 \leq  \| \eta(x_k, x^*) \|^2 + b \alpha^2 \|  \nabla f(x_k) \|^2 +  c \alpha (f(x^*) - f(x_k)) \\
			&c \alpha (f(x_k) - f(x^*)) \leq \| \eta(x_k, x^*) \|^2 - \| \eta(x_{k+1}, x^*) \|^2 + b \alpha^2 \|  \nabla f(x_k) \|^2
		\end{split}
	\end{align}
	Summing over $T$ terms, we get
	\begin{align}
		\label{eq:triangle inequality 4}
		\begin{split}
			&c \alpha \sum_{k=0}^T (f(x_k) - f(x^*)) \leq \| \eta(x_0, x^*) \|^2 - \| \eta(x_{T+1}, x^*) \|^2 + b \alpha^2 \sum_{k=0}^T \|  \nabla f(x_k) \|^2
		\end{split}
	\end{align}
	Since $\alpha \in  (0, \frac{2}{L}) $, we make two observations from equation~\eqref{eq:decreasing function},
	\begin{align}
		\label{eq:observation from thm 1}
		\begin{split}
			\sum_{k=0}^T \| \nabla f(x_k) \|^2 &\leq \frac{ f(x_0) - f(x_{T+1})}{\alpha (1 - \frac{L\alpha}{2})} \\
			f(x_{k+1}) - f(x_k) &\leq \alpha (-1 + \frac{L\alpha}{2}) \| \nabla f(x_k) \|^2 \leq 0
		\end{split}
	\end{align}
	By using the observations in equation~\eqref{eq:observation from thm 1}, it follows that 
	\begin{align}
		\label{eq:observation from thm 2}
		\begin{split}
			c \alpha T ( f(x_T) - f(x^*) ) &\leq \| \eta(x_0, x^*)  \|^2 +  \frac{b \alpha (f(x_0) - f(x^*)) }{1 - \frac{L\alpha}{2}} \;.
		\end{split}
	\end{align}
	Note that using $L$-smoothness condition and noticing that $\nabla f(x^*) = 0$, we can show that 
	\begin{align}
		\label{eq:using L smooth}
		f(x_0) - f(x^*) \leq \frac{L}{2} \| \eta(x_0, x^*) \|^2
	\end{align}
	Thus,
	\begin{align}
		\begin{split}
			f(x_T) - f(x^*)  &\leq \frac{1}{T} \frac{1}{c\alpha} (1+
			\frac{b \alpha L }{2 - L\alpha}) M^2
		\end{split}
	\end{align}
	This proves our claim.
\end{proof}

\paragraph{Theorem \ref{thm:convergence of strongly invex functions}}(Convergence of strongly invex functions.)
\emph{Let $f:\calC \to \real$ be an $L$-smooth $\mu$-strongly $\eta$-invex function such that $\calC$ satisfies Assumption~\ref{assum:triangle inequality}. Furthermore, let $x^* = \arg\min_{x \in \calC} f(x)$ such that $f(x^*) > -\infty,  \| \eta(x_0, x^*) \| \leq M < \infty$ and $\| \eta(y, x) \|_x^2 \leq R \| \eta(x, y) \|_y^2, \forall x, y \in \calC$ for some $R > 0$. If $x_k$ is computed using Algorithm~\ref{alg:invex grad alg} with $\alpha_k = \alpha \in (0,  \min (\frac{2}{ R \mu c }, \frac{c}{2bL}))$, then 
\begin{align}
	\label{eq:convergence of strongly invex}
	\| \eta(x_{k+1} , x^*) \|^2 \leq (1 - \frac{c \alpha R \mu}{2})^{k+1} M^2 \; .
\end{align}
}
\begin{proof}
	\label{proof:convergence of strongly invex functions}
	We begin our proof by proving an auxiliary lemma.
	\begin{lemma}
		\label{lem:L-smooth auxiliary}
		If $f$ is $L$-smooth then 
		\begin{align}
			f(x^*) - f(x) \leq -\frac{1}{2L} \| \nabla f(x) \|^2, \forall x \in \calC \; .
		\end{align}
	\end{lemma}
	\begin{proof}
		\label{proof:L-smooth auxiliary}
		We can always find a $y \in \calC$ such that $\eta(y, x) = - \frac{1}{L} \nabla f(x)$. Using equation~\eqref{eq:L-smooth} we have,
		\begin{align}
			\label{eq:l smooth auxiliary}
			\begin{split}
				f(y) &\leq f(x) - \frac{1}{L} \inner{\nabla f(x)}{ \nabla f(x)}_x + \frac{L}{2} \| \frac{1}{L} \nabla f(x) \|^2\\
				&=f(x) - \frac{1}{2L} \| \nabla f(x) \|^2
			\end{split}
		\end{align}
		Clearly, $f(x^*) \leq f(y)$, thus
		\begin{align}
			f(x^*) - f(x) \leq -\frac{1}{2L} \| \nabla f(x) \|^2 \; .
		\end{align}
		This proves our claim.
	\end{proof}
	
	Now using Assumption~\ref{assum:triangle inequality} for $x = x_k, y = x_{k+1}$ and $z = x^*$, we have
	\begin{align}
		\label{eq:convergence of strongly invex 1}
		\begin{split}
			\| \eta(x_{k+1}, x^*) \|^2 \leq & \| \eta(x_k, x^*) \|^2 + b \| \eta(x_{k+1}, x_k) \|^2 -  c \inner{ \eta(x_{k+1}, x_k) }{ \eta(x^*, x_k)}_{x_k} 
		\end{split}
	\end{align}
	Now since $\eta(x_{k+1}, x_k) = -\alpha \nabla f(x_k)$, we have
	\begin{align}
		\label{eq:convergence of strongly invex 2}
		\begin{split}
			\| \eta(x_{k+1}, x^*) \|^2 \leq & \| \eta(x_k, x^*) \|^2 + b \alpha^2 \| \nabla f(x_k) \|^2 +   c \alpha \inner{ \nabla f(x_k) }{ \eta(x^*, x_k)}_{x_k} 
		\end{split}
	\end{align}
	Using the strong convexity of $f$ and setting $y = x^*$ and $x = x_k$, we have  
	\begin{align}
		\label{eq:convergence of strongly invex 3}
		\begin{split}
			\| \eta(x_{k+1}, x^*) \|^2 \leq & \| \eta(x_k, x^*) \|^2 + b \alpha^2 \| \nabla f(x_k) \|^2 +   c \alpha (f(x^*) - f(x_k) - \frac{\mu}{2} \| \eta(x^*, x_k) \|^2) 
		\end{split}
	\end{align}
	Using the condition that $\| \eta(x^*, x_k) \|^2 \leq R \| \eta(x_k, x^*) \|^2$, we have
	\begin{align}
		\label{eq:convergence of strongly invex 4}
		\begin{split}
			\| \eta(x_{k+1}, x^*) \|^2 \leq & (1 -\frac{c \alpha R \mu}{2} ) \| \eta(x_k, x^*) \|^2 + b \alpha^2 \| \nabla f(x_k) \|^2 +  c \alpha (f(x^*) - f(x_k)) 
		\end{split}
	\end{align}
	Using $L$-smoothness of $f$ and Lemma~\ref{lem:L-smooth auxiliary}, we have
	\begin{align}
		\label{eq:convergence of strongly invex 5}
		\begin{split}
			\| \eta(x_{k+1}, x^*) \|^2 \leq & (1 -\frac{c \alpha R \mu}{2} ) \| \eta(x_k, x^*) \|^2 - \alpha   ( - 2 b \alpha  L  +  c )(f(x_k) - f(x^*)) 
		\end{split}
	\end{align}
	Taking $\alpha \leq \min (\frac{2}{ R \mu c }, \frac{c}{2bL}) $, we get
	\begin{align}
		\label{eq:convergence of strongly invex 6}
		\begin{split}
			\| \eta(x_{k+1}, x^*) \|^2 \leq & (1 -\frac{c \alpha R \mu}{2} ) \| \eta(x_k, x^*) \|^2 
		\end{split}
	\end{align}
	
	We prove our result by unrolling the recurrence in equation~\eqref{eq:convergence of strongly invex 6}.
\end{proof}

\subsection{Convergence of Projected Invex Gradient Descent Method}
\label{subsec:convergence of projected invex gradient method}
						
Next, we will show that once Assumption~\ref{assum:contraction} is satisfied by the projection operator, results from Theorems~\ref{thm:convergence of invex f with triangle inequality} and \ref{thm:convergence of strongly invex functions} extend nicely to Algorithm~\ref{alg:proj invex grad alg}. 

\begin{theorem}
    \label{thm:convergence of invex f with triangle inequality projected}
    Let $f: \calC \to \real$ be an $L$-smooth $\eta_1$-invex function such that $\calC$ satisfies Assumption~\ref{assum:triangle inequality}. Let $\calA \subseteq \calC$ be an $\eta_2$-invex set. Furthermore, let $x^* = \arg\min_{x \in \calA} f(x)$ such that $f(x^*) > -\infty$ and $\| \eta_1(x_0, x^*) \| \leq M < \infty$. If $x_k$ is computed using Algorithm~\ref{alg:proj invex grad alg} with $\alpha_k = \alpha \in (0,  \frac{2}{L})$ and with projection operator satisfying Assumption~\ref{assum:contraction}, then $f(x_k)$ converges to $f(x^*)$ at the rate $\calO(\frac{1}{k})$.  
\end{theorem}
\begin{proof}
	\label{proof:convergence of invex f with triangle inequality projected}
	First notice that since $x^* \in \calA$, we have $\rho_{\eta_2}(x^*) = x^*$. We follow the same proof technique as Theorem~\ref{thm:convergence of invex f with triangle inequality} until equation~\eqref{eq:triangle inequality 3} which becomes:
	\begin{align}
		\label{eq:triangle inequality 1 proj}
		\begin{split}
			&c \alpha (f(x_k) - f(x^*)) \leq \| \eta_1(x_k, x^*) \|^2 - \| \eta_1(y_{k+1}, x^*) \|^2 + b \alpha^2 \|  \nabla f(x_k) \|^2
		\end{split}
	\end{align}
	Using Assumption~\ref{assum:contraction}, we know that $\| \eta_1(y_{k+1}, x^*) \|^2 \geq  \| \eta_1(\rho_{\eta_2}(y_{k+1}), x^*) \|^2  = \| \eta_1(x_{k+1}, x^*) \|^2 $ and thus remaining steps of the proof follow.
\end{proof}

 We have a similar result for $\mu$-strongly $\eta$-invex functions.

\begin{theorem}
    \label{thm:convergence of strongly invex functions projected}
    Let $f:\calC \to \real$ be an $L$-smooth $\mu$-strongly $\eta_1$-invex function such that $\calC$ satisfies Assumption~\ref{assum:triangle inequality}. Let $\calA \subseteq \calC$ be an $\eta_2$-invex set. Furthermore, let $x^* = \arg\min_{x \in \calA} f(x)$ such that $f(x^*) > -\infty,  \| \eta_1(x_0, x^*) \| \leq M < \infty$ and $\| \eta_1(y, x) \|_x^2 \leq R \| \eta_1(x, y) \|_y^2, \forall x, y \in \calC$ for some $R > 0$. If $x_k$ is computed using Algorithm~\ref{alg:proj invex grad alg} with $\alpha_k = \alpha \in (0,  \min (\frac{2}{ R \mu c }, \frac{c}{2bL}))$ and with projection operator satisfying Assumption~\ref{assum:contraction}, then 
    \begin{align}
        \label{eq:convergence of strongly invex proj}
        \| \eta_1(x_{k+1} , x^*) \|^2 \leq \left(1 - \frac{c \alpha R \mu}{2}\right)^{k+1} M^2 \; .
    \end{align}
\end{theorem}
\begin{proof}
	\label{proof:convergence of strongly invex functions projected}
	Again, we notice that since $x^* \in \calA$, we have $\rho_{\eta_2}(x^*) = x^*$. We follow the same proof technique as Theorem~\ref{thm:convergence of strongly invex functions} until equation~\eqref{eq:convergence of strongly invex 6} which becomes:
	\begin{align}
		\label{eq:convergence of strongly invex 6 proj}
		\begin{split}
			\| \eta_1(y_{k+1}, x^*) \|^2 \leq & (1 -\frac{c \alpha R \mu}{2} ) \| \eta_1(x_k, x^*) \|^2 
		\end{split}
	\end{align}
	Using Assumption~\ref{assum:contraction}, we note that $\| \eta_1(x_{k+1}, x^*) \|^2  = \| \eta_1(\rho_{\eta_2}(y_{k+1}), x^*) \|^2 \leq \| \eta_1(y_{k+1}, x^*) \|^2 $ and thus remaining steps of the proof follow.	
\end{proof}

\paragraph{Theorem \ref{thm: contraction geodesically convex}}(Projection contracts for geodesically convex sets with negative curvature.)
\emph{
	Let $\rho$ be the projection operator as defined in~\ref{def: Projection on invex set} on a closed geodesically convex subset $\calA$  of a simply connected Riemannian manifold $\calC$ with sectional curvature upper bounded by $\kappa \leq 0$. Then the projection satisfies Assumption~\ref{assum:contraction}.
}
\begin{proof}
	\label{proof: contraction geodesically convex} 
	First note that since geodesics are constant velocity curves, there exists a parameterization such that $d(y, x) = \| \dot{\gamma}_{x, y}(0) \| = \| \eta(y, x) \|$ where $d(y, x)$ is the length of geodesic between $x$ and $y$. Thus, it only remains to show that $d(y, x)$ contracts for geodesically convex sets (sometimes known as totally convex sets) which follows from Lemma 11.2 of \cite{udriste2013convex}.
\end{proof}			
				
\paragraph{Lemma \ref{lem: eta for h(W)}}
\emph{The function $h(W) = - \log \det (I - W \circ W), \forall W \in \mathcal{W}$ is $\eta$-invex for $\eta(U, W) = -\frac{1}{2} ((I - W \circ W) (\log(I - U \circ U) - \log(I - W \circ W))) \oslash W$.}
\begin{proof}
	\label{proof: eta for h(W)}
	The invexity of $h(W)$ is already shown by~\cite{bello2022dagma}. Here, we will verify that our proposed $\eta$ satisfies equation~\eqref{eq:invexity}. Note that $\nabla h(W) = 2 (I - W \circ W)^{-\T} \circ W$. Then $\forall U, W \in \mathcal{W}$,
	\begin{align*}
		&h(U) - h(W) - \inner{\eta(U, W)}{\nabla h(W)} = - \log \det (I - U \circ U) +  \log \det (I - W \circ W) - \\
		& \inner{ -\frac{1}{2} ((I - W \circ W) (\log(I - U \circ U) - \log(I - W \circ W))) \oslash W}{2 (I - W \circ W)^{-\T} \circ W} \\
		&= 0
	\end{align*}
	This validates our claim.
\end{proof}
			
\paragraph{Lemma \ref{lem:eta1 for mlr}}
\emph{	Assume that $\widetilde{W} \ne \widetilde{U}$, then functions $f(t, W, U) = \sum_{i=1}^n \frac{1}{2} \inner{S_i}{W + U} + \frac{1}{2} t_i \inner{S_i}{W - U}, g(t, W, U) = \| \vect(W) \|_1$ and $h(t, W, U) = \| \vect(U) \|_1$ are $\eta_1$-invex for 
	\begin{align}
		\eta_1((t, W, U), (\widetilde{t}, \widetilde{W}, \widetilde{U})) =  \begin{bmatrix} \tau \circ (t - \widetilde{t}) \\ W - \widetilde{W} \\ U - \widetilde{U}   \end{bmatrix} \; ,
	\end{align}
	where $\tau(W, U, \widetilde{W}, \widetilde{U}) \in \real^n$ such that $\tau_i = \frac{\inner{S_i}{W - U}}{\inner{S_i}{\widetilde{W} - \widetilde{U}}}$. 
}
\begin{proof}

	The invexity of the objective function in \eqref{eq:invex mlr} is already shown by~\cite{barik2022sparse}. It suffices to verify that our proposed $\eta$ satisfies equation~\eqref{eq:invexity} for $f(t, W, U), g(t, W, U)$ and $h(t, W, U)$. It can be trivially verified that our $\eta_1$ works for $g(t, W, U)$ and $h(t, W, U)$ due to their convexity. For $f(t, W, U)$,
	\begin{align*}
		\frac{\partial f}{\partial t_i} &= \frac{1}{2} \inner{S_i}{W - U} \\
		\frac{\partial f}{\partial W} &= \sum_{i=1}^n \frac{t_i + 1}{2} S_i \\
		\frac{\partial f}{\partial U} &= \sum_{i=1}^n \frac{1 - t_i}{2} S_i
	\end{align*}
	It is easy to verify that 
	\begin{align*}
		& f(t, W, U) - f(\widetilde{t}, \widetilde{W}, \widetilde{U}) - \inner{\eta_1(\cdot, \cdot)}{\nabla f(\widetilde{t}, \widetilde{W}, \widetilde{U})} =  \sum_{i=1}^n \frac{1}{2} \inner{S_i}{W + U} + \frac{1}{2} t_i \inner{S_i}{W - U} - \\
		&\sum_{i=1}^n \frac{1}{2} \inner{S_i}{\widetilde{W} + \widetilde{U}} - \frac{1}{2} \widetilde{t}_i \inner{S_i}{\widetilde{W} - \widetilde{U}} - \sum_{i=1}^n \tau_i \frac{1}{2} \inner{S_i}{\widetilde{W} - \widetilde{U}} - \inner{W - \widetilde{W}}{\sum_{i=1}^n \frac{\widetilde{t}_i + 1}{2} S_i } - \\
		&\inner{U - \widetilde{U}}{\sum_{i=1}^n \frac{1 - \widetilde{t}_i}{2} S_i } \\
		&= 0
	\end{align*}
\end{proof}

\end{document}